\numberwithin{equation}{section}
\def\irr#1{{\Irr}(#1)}
\def\irrp#1{{\Irr}_{p'}(#1)}
\def\irrq#1{{\Irr}_{q'}(#1)}
\def\irrpi#1{{\rm Irr}_{\pi'}(#1)}
\def\zent#1{{\bf Z}(#1)}
\def\nor{\triangleleft}
\def\det#1{{\rm det}(#1)}
\def\norm#1#2{{\bf N}_{#1}(#2)}
\def\aut#1{{\rm Aut}(#1)}
\def\out#1{{\rm Out}(#1)}
\let\phi=\varphi
\def\sbs{\subseteq}
\newcommand{\tw}[1]{{}^#1\!}
\newcommand{\NN}{{\mathbb{N}}}
\newcommand{\bG}{{\mathbf{G}}}
\newcommand{\cH}{{\mathcal H}}
\newcommand{\wt}[1]{\widetilde{#1}}
 \newcommand{\6}{^}
\newcommand{\Irr}{\operatorname{Irr}}
\newcommand{\GL}{{\operatorname{GL}}}
\newcommand{\GU}{{\operatorname{GU}}}
\newcommand{\SL}{{\operatorname{SL}}}
\newcommand{\PSL}{{\operatorname{PSL}}}
\newcommand{\SU}{{\operatorname{SU}}}
\let\la=\lambda
\newtheorem{lem}[subsection]{Lemma}
\newtheorem{cor}[subsection]{Corollary}
\newtheorem{thm}[subsection]{Theorem}
\newtheorem{prop}[subsection]{Proposition}
\newtheorem*{thm*}{Theorem}
\newtheorem*{thmA}{Theorem A}
\newtheorem*{thmB}{Theorem B}
\newtheorem*{thmC}{Theorem C}
\theoremstyle{definition}
\newtheorem{rem}[subsection]{Remark}
\theoremstyle{definition}
\theoremstyle{definition}
\theoremstyle{definition}
\theoremstyle{definition}
\def\N{\mathbb N}
\begin{document}

\keywords{Character degrees, perfect groups}

\subjclass[2010]{20C15, 20C30, 20C33}

\thanks{The second-named author is partially supported by a grant from the National Science Foundation (Award No. DMS-1801156). The third-named author is partially supported by the Spanish Ministerio
de Educaci\'on y Ciencia Proyectos MTM2016-76196-P and the ICMAT Severo Ochoa
project SEV-2011-0087.}

\author{Eugenio Giannelli}
\address{Dipartimento di Matematica e Informatica, U. Dini,
V. Morgagni 67, Firenze, Italy.}
\email{eugenio.giannelli@unifi.it}

\author{A. A. Schaeffer Fry}
\address{Dept. Mathematical and Computer Sciences, MSU Denver, Denver, CO 80217, USA}
\email{aschaef6@msudenver.edu}

\author{Carolina Vallejo Rodr\'iguez}
\address{ICMAT, Campus Cantoblanco UAM, C/ Nicol\' as Cabrera, 13-15,
  28049 Madrid, Spain}
\email{carolina.vallejo@icmat.es}

\title[Characters of $\pi'$-degree]{Characters of $\pi'$-degree}

\date{\today}

\begin{abstract}
Let $G$ be a finite group and let $\pi$ be a set of primes. Write $\irrpi G$ for the set of irreducible characters of degree not divisible by any prime in $\pi$. We show that if $\pi$ contains at most two prime numbers and the only element in $\irrpi G$ is the principal character, then $G=1$.
\end{abstract}

\maketitle

\section*{Introduction}  

Let $G$ be a finite group and let $\pi$ be a set of primes.  Write $\irrpi G$ for the set of irreducible characters of degree not divisible by any prime in $\pi$. If $\pi=\{ p \}$, then we use the standard notation $\irrp G=\irrpi G$. The condition $\irrp G=\{ 1_G\}$ implies that $G=1$ in an elementary way. Indeed, in such situation we have that $G$ is a $p'$-group, since the order of $G$ is the sum of the squares of the degrees of its irreducible characters. Hence $p$ does not divide the degree of any irreducible character of $G$ and $\irr G=\irrp G=\{1_G\}$ implies $G=1$ as wanted. We show that the same result holds if $\pi$ contains at most two primes.
\begin{thmA} Let $\pi=\{ p, q\}$ be a set of primes. If $\irrpi G=\{ 1_G\}$, then $G=1$.
\end{thmA}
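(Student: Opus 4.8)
The plan is to reduce Theorem~A to a statement about nonabelian finite simple groups and then to settle that statement via the classification. Suppose for contradiction that $G\neq 1$ is a counterexample of minimal order. If $1\neq N\nor G$, inflation identifies $\irr{G/N}$ with the set of those $\chi\in\irr G$ having $N\le\ker\chi$, and it preserves degrees; hence it identifies $\irrpi{G/N}$ with a subset of $\irrpi G=\{1_G\}$ that contains the inflation of $1_{G/N}$. Thus $\irrpi{G/N}=\{1_{G/N}\}$, and minimality forces $G/N=1$. Therefore $G$ is simple, and it is nonabelian since otherwise every irreducible character would be linear, giving $\irrpi G=\irr G$ of cardinality $|G|>1$. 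So everything reduces to proving: for every nonabelian finite simple group $S$ and every pair of primes $p,q$, there is a nontrivial $\chi\in\irr S$ with $p\nmid\chi(1)$ and $q\nmid\chi(1)$. Equivalently, the family of prime-divisor sets of the nontrivial degrees of $S$ admits no covering by two primes.

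A tempting first reduction is to exhibit three nontrivial characters of pairwise coprime degree, since then $p$ and $q$ together divide at most two of the three. This is, however, too strong: already for $\operatorname{Sz}(8)$ the nontrivial degrees are $14,35,64,65,91$, and no three of these are pairwise coprime. The criterion I would actually aim for, at least for groups of Lie type in defining characteristic $r$, splits into two parts. First, the Steinberg character has degree $|S|_r$, a power of $r$, hence coprime to every prime different from $r$; this disposes of every pair $\{p,q\}$ with $r\notin\{p,q\}$. Second, I would show that the nontrivial irreducible characters of $r'$-degree have \emph{no common prime divisor}; this disposes of every pair $\{r,q\}$, because then some $r'$-degree escapes $q$ as well. (For $\operatorname{Sz}(8)$ this is exactly what happens: $64$ is the Steinberg degree, while the odd degrees $35,65,91$ have greatest common divisor $1$.)

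To carry out the second part for groups of Lie type I would use semisimple characters, whose degrees are $r'$-numbers built from cyclotomic factors $\Phi_d(q)$ in the order polynomial. The key number-theoretic input is the existence of Zsygmondy primes, which produce primes dividing $\Phi_d(q)$ but not $\Phi_e(q)$ for $e<d$; choosing semisimple (or unipotent) characters attached to different $d$ then yields $r'$-degree characters with disjoint, hence non-common, prime divisors. For the alternating groups $A_n$ I would instead produce explicit irreducible characters of $S_n$ (and control their restriction to $A_n$) whose degrees, computed by the hook-length formula, have suitably spread prime divisors, again invoking elementary prime-distribution facts; for the finitely many sporadic groups the required character degrees can be read off the Atlas.

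The main obstacle is the uniform treatment of the groups of Lie type, and specifically the pairs in which one of $p,q$ equals the defining characteristic $r$: here the Steinberg character is useless and one must genuinely control the prime divisors of the $r'$-degrees, keeping track of the exceptional behaviour of small Zsygmondy cases and of groups with disconnected center where semisimple degrees are subtler. I expect the genuinely delicate work to concentrate in the small-rank and twisted families—the Suzuki and Ree groups and the low-rank classical groups over small fields—together with small values of $n$ for $A_n$, where the generic argument breaks down and a short case-by-case verification will be needed.
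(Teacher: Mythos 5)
Your reduction to simple groups is correct and is essentially the reduction the paper itself makes: a minimal counterexample is perfect, has every proper quotient trivial on $\pi'$-degrees, and hence collapses to a non-abelian simple group for which one must exhibit a non-principal character of degree coprime to $pq$. The gap is that this last statement (the paper's Theorem B) is the entire content of the theorem, and you only outline it, explicitly deferring ``a short case-by-case verification'' for the hard families. Within that outline there are two concrete missing ideas. First, for groups of Lie type with $r\in\pi$, say $r=p$, the workable construction is not a Zsigmondy-prime argument on cyclotomic factors but the following: pick $s\neq 1$ in the center of a Sylow $q$-subgroup $Q$ of $G^\ast$; the semisimple character attached to $(s,1_{C_{G^\ast}(s)})$ has degree $|G^\ast:C_{G^\ast}(s)|_{r'}$, which is automatically prime to both $r$ and $q$. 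The genuinely delicate point, which your sketch does not identify, is descending this character from the quasisimple group $G=\bG^F$ to the simple quotient $S=G/Z(G)$: one needs $s$ to lie in $(G^\ast)'$, and arranging $Z(Q)\cap(G^\ast)'\neq 1$ fails exactly in type $A$ when $n$ is a $q$-power or a sum of two $q$-powers; this is where the paper invests its hardest work (a Weir/Carter--Fong analysis of Sylow $q$-subgroups of $\GL_n^\epsilon$), and it is not an artifact of the method --- the paper shows $\irrpi{\GL_n^\epsilon(p^a)}$ really can consist only of linear characters.

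Second, for alternating groups your plan of producing characters of ${\sf S}_n$ with spread-out hook-length degrees and restricting them cannot work uniformly: the paper proves that $\irrpi{{\sf S}_n}=\mathrm{Lin}({\sf S}_n)$ actually occurs, namely when $q=2$ and $n=2^k=p^m+1$ or $n=2^k+1=p^m$. In those cases every non-linear $\pi'$-degree character of ${\sf A}_n$ is necessarily a constituent of a character $\chi^\mu$ with $\mu=\mu'$ and $\nu_2(\chi^\mu(1))=1$, so that the degree is halved on restriction; detecting this requires the $2$-core-tower/Macdonald machinery, not just the hook-length formula. So the proposal is a sound plan whose reduction step matches the paper, but it stops short of the actual proof and underestimates two of its essential obstructions.
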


We remark that the result no longer holds if $|\pi|>2$. For example, if $\pi=\{ 2, 3, 5  \}$ then, $\mathrm{Irr}_{\pi'}({\sf A}_{7})=\{1_{{\sf A}_7}\}$.  

\smallskip

Often in Representation Theory of Finite Groups we find a duality between statements on irreducible characters and corresponding ones on conjugacy classes. 
For instance if $p$ is a prime and the conjugacy class of the identity is the unique conjugacy class of $p'$-size of $G$, then $G=1$. This is the dual statement of the one for irreducible characters described in the first paragraph of this section. 
We care to remark that the conjugacy class-version of Theorem A does not hold. 
For instance, the conjugacy class sizes of ${\sf A}_5$ are $1$, $15$, $20$, $12$ and $12$, so for every pair of primes $\pi$ dividing its order, the identity is the only conjugacy class of ${\sf A}_5$ of $\pi'$-size.

\smallskip

Our proof of Theorem A mainly relies on the Classification of Finite Simple Groups. 
We do not know if a CFSG-free proof might exist or if this result heavily depends on properties inherent to the representations of simple groups. 

\smallskip

The key observation to prove Theorem A is that for a simple group $G$ and any set $\pi=\{p, q\}$ of primes dividing the order of $G$, there exists some non-principal character in $\irrpi G$. 
Let $\Gamma'(G)$ be the undirected graph defined as follows.
The set of vertices of $\Gamma'(G)$ is the set of primes dividing the order of $G$, denoted $\pi(|G|)$. Two vertices $p$ and $q$ are adjacent if there is some $\chi\in\irr G \setminus {\rm Lin}(G)$ such that $\chi(1)$ is not divisible by $p$ nor by $q$. Here ${\rm Lin}(G)$ denotes the set of linear characters of $G$.  With this, the claim above can be stated in the following way.

\begin{thmB}\label{complete}
If $G$ is a non-abelian simple group, then $\Gamma' (G)$ is complete.
\end{thmB}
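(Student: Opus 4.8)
The plan is to prove Theorem~B via the Classification of Finite Simple Groups. Since $G$ is non-abelian simple we have ${\rm Lin}(G)=\{1_G\}$, so it suffices to show that for every pair of distinct primes $p,q\in\pi(|G|)$ there is a non-principal $\chi\in\irr G$ with $p\nmid\chi(1)$ and $q\nmid\chi(1)$. I would treat the three families separately: the alternating groups ${\sf A}_n$, the groups of Lie type, and the finitely many sporadic groups (together with the Tits group).

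For the groups of Lie type, let $r$ be the defining characteristic and split according to whether $r\in\{p,q\}$. If $r\notin\{p,q\}$, the Steinberg character settles the matter immediately: its degree is $|G|_r$, a power of $r$, hence coprime to both $p$ and $q$, and it is non-principal. If, say, $p=r$, I would use semisimple characters instead. Passing to a dual group $G^\ast$, choose a nontrivial element $s$ in the centre of a Sylow $q$-subgroup $\syl{q}{G^\ast}$; since $q\neq r$ such $s$ is semisimple, and the associated semisimple character $\chi_s$ has degree $[G^\ast:\cent{G^\ast}{s}]_{r'}$, which is automatically coprime to $r$ and, because $\syl{q}{G^\ast}\le\cent{G^\ast}{s}$, also coprime to $q$. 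The delicate point is non-principality on the simple group: one must arrange $s\notin\zent{G^\ast}$ so that $\chi_s$ is non-trivial on $G$, and reconcile the computation in the (simply connected) algebraic group with the passage to the simple quotient. This forces a separate analysis of the small cases and of the situations where $q$ divides $|\zent{G^\ast}|$.

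For the alternating groups I would work through the representation theory of ${\sf S}_n$, where $\Irr({\sf S}_n)=\{\chi^\lambda:\lambda\vdash n\}$ and degrees are governed by the hook length formula. The aim is a partition $\lambda\neq(n),(1^n)$ with $\chi^\lambda(1)$ coprime to $pq$, together with control of the restriction to ${\sf A}_n$ (which is irreducible of the same degree when $\lambda\neq\lambda'$, and splits into two equal-degree constituents when $\lambda=\lambda'$). A first attempt is to use hook partitions $(n-k,1^{k})$, whose degrees are the binomial coefficients $\binom{n-1}{k}$; by Kummer's theorem one then seeks a single $k$ with no base-$p$ and no base-$q$ carries. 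This works generically but fails precisely when $n-1$ is a power of $p$ or of $q$, where only the trivial choices $k=0,n-1$ survive; such cases demand broader families (for instance two-row partitions, or manipulations of $p$-cores and $p$-quotients). Groups of small degree such as ${\sf A}_5,\dots,{\sf A}_8$ can be checked by hand.

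Finally, the sporadic groups and the Tits group form a finite list, and I would confirm completeness of $\Gamma'(G)$ by inspecting their known character tables. I expect the two genuine obstacles to be the defining-characteristic case for groups of Lie type---producing an honest non-principal semisimple character of $\{r,q\}'$-degree while controlling centres and isogeny---and the combinatorial construction for ${\sf A}_n$ when $n-1$ is a prime power, where a single hook no longer suffices and a more flexible choice of partition is required.
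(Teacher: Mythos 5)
Your overall strategy coincides with the paper's: reduction via the CFSG, the Steinberg character when the defining characteristic $r\notin\pi$, semisimple characters $\chi_s$ with $s$ a non-trivial element of the centre of a Sylow $q$-subgroup of $G^\ast$ when $r\in\pi$ (including the reduction to type $A$ and the delicate analysis of centres and isogeny when $q$ divides $|\zent{G^\ast}|$), partition combinatorics for the alternating groups, and explicit character-table checks for the sporadic groups and small exceptions. On the Lie-type and sporadic sides your plan matches the paper's proof (Lemmas \ref{lem:lowrank}, \ref{lem:reductive} and Theorem \ref{thm:mainLietype}) essentially step for step.

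There is, however, a genuine gap in your plan for ${\sf A}_n$. You propose to find a non-trivial $\lambda\vdash n$ with $\chi^\lambda(1)$ coprime to $pq$ and then restrict to ${\sf A}_n$, passing to ``broader families'' of partitions when hooks fail. But Theorem \ref{teo: MainEu} shows that for $q=2$ and $n=2^k=p^m+1$ or $n=2^k+1=p^m$ one has $\mathrm{Irr}_{\pi'}({\sf S}_n)=\mathrm{Lin}({\sf S}_n)$: \emph{no} partition of $n$, hook or otherwise, yields a non-linear $\pi'$-character of ${\sf S}_n$, so no broader family within your search space can succeed. In these cases the only route is to take a self-conjugate partition $\mu$ whose character degree is \emph{even} but with $\nu_2(\chi^\mu(1))$ exactly $1$ and $p\nmid\chi^\mu(1)$, so that the two constituents of $(\chi^\mu)_{{\sf A}_n}$ have odd $p'$-degree; the paper does this with $\mu=(2^{k-1}+1,1^{2^{k-1}})$ or $(2^{k-1},2,1^{2^{k-1}-2})$ and the valuation computation of Lemma \ref{lem: oldlayer} via Macdonald's formula. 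Although you note the splitting for $\lambda=\lambda'$, your search criterion ($\chi^\lambda(1)$ coprime to $pq$) excludes exactly the characters needed here, so your argument terminates without a candidate precisely in the exceptional cases. A smaller inaccuracy: the set of $k$ with $\binom{n-1}{k}$ coprime to $pq$ can already reduce to $\{0,n-1\}$ when $n-1$ is not a prime power (e.g. $n-1=10$, $\pi=\{2,3\}$), so hooks fail more often than you claim, though this does not affect the viability of the approach.
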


In fact, Theorem A implies that $\Gamma' (G)$ is complete for every perfect group $G$. We also analyze the opposite situation, namely, the case of finite groups $G$ with totally disconnected graph $\Gamma'(G)$.

\begin{thmC}
 Let $G$ be a group. Then $\Gamma'(G)$ is totally disconnected if, and only if, $G$ is solvable and $\norm G H \cap G'= H'$ for every $\pi$-Hall subgroup $H$ of $G$, where $\pi$ is any pair of primes dividing the order of $G$.
\end{thmC}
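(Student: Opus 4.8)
The plan is to reformulate total disconnectedness characterwise and then separate a global step (solvability, via Theorem B and CFSG) from a local step (the Hall condition, via a counting statement for solvable groups). First I would record two easy reductions. Since $\mathrm{Lin}(G)\subseteq\irrpi G$ and $|\mathrm{Lin}(G)|=|G:G'|$, a pair $\{p,q\}$ fails to be an edge of $\Gamma'(G)$ if and only if $\irrpi G=\mathrm{Lin}(G)$, equivalently $|\irrpi G|=|G:G'|$; thus $\Gamma'(G)$ is totally disconnected exactly when this holds for every pair $\pi=\{p,q\}\subseteq\pi(|G|)$. Moreover inflation along $G\to G/N$ preserves degrees and sends non-linear characters to non-linear characters, so every edge of $\Gamma'(G/N)$ is an edge of $\Gamma'(G)$; hence total disconnectedness is inherited by all quotients.

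For the solvability direction, assume $\Gamma'(G)$ is totally disconnected and suppose $G$ is non-solvable. Choosing a topmost non-abelian chief factor and using quotient-inheritance, I may assume $G$ has a non-abelian minimal normal subgroup $M\cong S^k$ with $G/M$ solvable, $S$ simple. By Theorem B the graph $\Gamma'(S)$ is complete, so $S$ carries non-linear characters whose degree avoids two primes; the aim is to upgrade one of these to a non-linear $\chi\in\irr G$ of degree coprime to two primes of $\pi(|G|)$, which would be an edge and a contradiction. Concretely I would select a non-linear $\psi\in\irr S$ that is invariant under the automorphisms induced by $G$ and whose degree avoids two primes $p,q$, form the $G$-invariant character $\psi\times\cdots\times\psi$ of $M$, and lift it to $G$, controlling the degree by the solvability of $G/M$. \textbf{The main obstacle is exactly this lifting}: guaranteeing an automorphism-invariant (ideally extendible) non-linear character of $S$ of degree avoiding two primes, and ensuring that neither the ramification over $M$ nor the passage through the solvable quotient $G/M$ introduces a third prime into $\chi(1)$. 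This is where genuine CFSG input on the simple groups is needed, and I expect it to require the same case analysis underlying Theorem B together with known results on extendibility of characters of $p'$-degree.

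For the local step, fix a solvable $G$ and a pair $\pi$, let $H$ be a Hall $\pi$-subgroup and set $\mathbf N=\norm G H$; here $H\trianglelefteq\mathbf N$ is a normal Hall $\pi$-subgroup, so $\mathbf N=H\rtimes L$ with $L$ a Hall $\pi'$-subgroup by Schur--Zassenhaus. I would first prove the elementary identity $\irrpi{\mathbf N}=\Irr(\mathbf N/H')$: any $\chi\in\irrpi{\mathbf N}$ lies over a constituent of $\chi|_H$ of degree dividing both $|H|$ and $\chi(1)$, hence linear, so $H'\le\ker\chi$; conversely, if $H'\le\ker\chi$ then $\chi$ lies over a linear character of $H$ and, since $\mathbf N/H\cong L$ is a $\pi'$-group, both the inertia index and the ramification are $\pi'$-numbers, so $\chi(1)$ is a $\pi'$-number. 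Next I invoke the Hall-subgroup analogue of the McKay counting statement for $\pi$-separable groups, $|\irrpi G|=|\irrpi{\mathbf N}|$. Combining with $H'\le\mathbf N'\le\mathbf N\cap G'$ and the Frattini-type fact $\mathbf N G'=G$ (all $G$-conjugates of $H$ lie in $HG'$, where they are conjugate, so $G=\norm G H\,G'$), I obtain
\[
|\irrpi G|=|\Irr(\mathbf N/H')|\ge|\mathbf N:\mathbf N'|\ge|\mathbf N G':G'|=|G:G'|,
\]
in which equality throughout holds if and only if $\mathbf N\cap G'=H'$.

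Finally I assemble the equivalence. If $\Gamma'(G)$ is totally disconnected then $G$ is solvable by the second paragraph, and applying the chain of the third paragraph to each pair $\pi$ forces $\norm G H\cap G'=H'$ for every Hall $\pi$-subgroup. Conversely, if $G$ is solvable and this Hall condition holds for every pair, the same chain gives $|\irrpi G|=|G:G'|$, hence $\irrpi G=\mathrm{Lin}(G)$, for every pair, so $\Gamma'(G)$ has no edges. The cases $|\pi(|G|)|\le 2$ are automatic, since Burnside's theorem forces solvability and both conditions then hold vacuously.
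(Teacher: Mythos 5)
Your local step for solvable $G$ is correct: the identity $\irrpi{N}=\Irr(N/H')$ for $N=\norm G H$, combined with Wolf's $\pi$-version of the McKay count $|\irrpi G|=|\irrpi N|$ and the Frattini argument $G=NG'$, gives exactly the chain of inequalities whose case of equality is $N\cap G'=H'$. But note that this is in substance a proof of the result of Navarro and Wolf that the paper simply cites as \cite[Corollary 3]{NW} (Theorem \ref{NW}); you have re-derived the quoted ingredient rather than found a different route.

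The genuine gap is the one you flag yourself: establishing that $G$ is solvable. Your plan --- pass to a quotient with a non-abelian minimal normal subgroup $M\cong S^k$ and solvable $G/M$, take a nonlinear $\pi'$-character of $S$ supplied by Theorem B, and lift it to $G$ --- founders precisely on the extendibility issue, and this cannot be waved away: the paper's own Proposition \ref{prop:Eunew} and Proposition \ref{prop:SLnew} show that for certain pairs $\pi$ a simple group $S$ has \emph{no} non-principal character in $\irrpi S$ extending to $\aut S$, so the input your lifting requires is not available in general, and controlling the ramification over $M$ together with the contribution of $G/M$ is exactly as hard as you fear. The paper closes this direction by an entirely different and much shorter argument (Lemma \ref{BCLP}): if $\Gamma'(G)$ is totally disconnected and $|G|$ has $m\geq 3$ prime divisors, then every nonlinear $\chi\in\irr G$ has degree divisible by at least $m-1$ of those primes, so any two nonlinear degrees share a common prime factor; hence the common-divisor character degree graph $\Gamma(G)$ is complete and $G$ is solvable by the Main Theorem of \cite{BCLP}, the case $m\leq 2$ being Burnside. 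Replacing your second paragraph by this observation repairs the proof; Theorem B is not needed for Theorem C at all.
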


Surprisingly enough, if $\pi$ consists of two 
primes, then there are many examples where $\irrpi G={\rm Lin}(G)$. 
For instance, this is the case if $G={\rm PSL}_2(27)\cdot C_3$ and $\pi=\{3,13\}$.
Infinitely many other examples of this phenomenon can be found among symmetric, general linear, and general unitary groups as shown by Theorems \ref{teo: MainEu} and \ref{thm:complete_typeA} below.
In fact, we can precisely describe which groups satisfy this condition in the latter cases in Theorems \ref{teo: MainEu} and \ref{thm:mainLietype}. Finally, we characterize groups $G$ satisfying $\irrpi G={\rm Lin}(G)$ in terms of their solvable residual and discuss their non-abelian composition factors in Theorem \ref{solvable_residual} and Lemma \ref{lem:silly}.



\smallskip

The paper is structured as follows. In Section 1 we prove Theorem A assuming that Theorem B holds. We also prove Theorem C, using previous results of Bianchi, Chillag, Lewis, and Pacifici \cite{BCLP} and of Navarro and Wolf \cite{NW}. The rest of the paper is mostly devoted to the proof of Theorem B on finite simple groups. In Section 2, we prove that $\Gamma'(G)$ is complete whenever $G$ is an alternating group, and we describe $\Gamma'(G)$ for symmetric groups. In Section 3, we prove that $\Gamma'(G)$ is complete when $G$ is a sporadic group or simple  group of Lie type, completing the proof of Theorem B by applying the Classification of Finite Simple Groups. We also provide there a description of $\Gamma'(G)$ when $G$ is a general linear or general unitary group. Finally, in Section 4, we discuss the structure of groups satisfying $\irrpi G={\rm Lin}(G)$ for a pair of primes $\pi$.

\smallskip

\noindent {\bf Acknowledgements.}
The authors would like to thank Gabriel Navarro for useful comments on a previous version of this paper and the anonymous referee for valuable suggestions. 
Finally, part of this work has been carried out while the third author was visiting the University of Florence. She thanks Carlo Casolo and the entire algebra group for the kind hospitality.

\section{On Theorems A and C}
Assuming that Theorem B holds, which follows from Corollary \ref{cor: EuMain} and Theorem \ref{thm:mainLietype} below, we can easily prove Theorem A.
\begin{proof}[Proof of Theorem A]
By way of contradiction assume that $G>1$. We may assume that $|\pi|=2$ and that $p$ and $q$ divide the order of $G$, otherwise the result follows from the case where $|\pi|=1$ treated in the introduction.  The fact that ${\rm Lin}(G)\sbs \irrpi G=\{ 1_G\}$ forces $G$ to be perfect. Moreover, if $N\nor G$ has index coprime to $p$, then $\irrpi {G/N}=\irrq {G/N}=\{1_{G/N}\}$ implies $N=G$. Similarly, one concludes that $G$ has no normal subgroup of index coprime to $q$. 
If we let $M\nor G$ be the first (proper) term in a composition series of $G$, then $S=G/M$ is a simple non-abelian group of order divisible by $p$ and $q$.  
Since the property is inherited by quotients of $G$, we have that $\mathrm{Irr}_{\pi'}(G/M)=\{1_{G/M}\}$.
By Theorem B we conclude that $G=M$, and this is a contradiction.
\end{proof}

The proof of Theorem C relies on  \cite{BCLP} and \cite{NW}. We will first show that if $\Gamma'(G)$ is totally disconnected, then the group $G$ must be solvable. For a group $G$, the common-divisor character degree graph $\Gamma(G)$ of $G$ is defined as follows.  The vertices of $\Gamma(G)$ are the degrees of the irreducible characters of $G$, and two vertices $a$ and $b$ are adjacent if $\gcd(a, b)>1$. In \cite{BCLP}, the authors prove that if $\Gamma(G)$ is complete, then $G$ is solvable.

\begin{lem}\label{BCLP} Let $G$ be a group with totally disconnected $\Gamma'(G)$. Then $G$ is solvable. 
\end{lem}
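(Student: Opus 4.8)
The plan is to prove the contrapositive: if $G$ is non-solvable, then $\Gamma'(G)$ contains at least one edge, i.e. is \emph{not} totally disconnected. The natural bridge to the cited result of \cite{BCLP} is the common-divisor graph $\Gamma(G)$, so the first step is to relate the two graphs. I would argue as follows: if $\Gamma'(G)$ is totally disconnected, then for \emph{every} pair of distinct primes $p, q$ dividing $|G|$, there is no nonlinear $\chi \in \irr G$ with $\chi(1)$ coprime to both $p$ and $q$. Equivalently, every nonlinear irreducible degree $\chi(1)$ is divisible by at least one prime from each such pair.

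The key observation I would extract from this is that the total disconnectedness of $\Gamma'(G)$ forces a strong structure on the set of nonlinear degrees. Concretely, I claim that any two nonlinear degrees $a = \chi(1)$ and $b = \psi(1)$ must satisfy $\gcd(a,b) > 1$, so that $\Gamma(G)$ (restricted to nonlinear degrees) is complete. To see this, suppose toward a contradiction that $\gcd(a,b) = 1$ for two nonlinear degrees. Then pick a prime $p \mid a$ and a prime $q \mid b$; since $\gcd(a,b)=1$ we may choose $p \neq q$. Now $b$ is coprime to $p$ (as $p \mid a$ and $\gcd(a,b)=1$), and I want $b$ also coprime to $q$ — but $q \mid b$, so this naive choice fails. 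The correct argument is more delicate: if $a$ and $b$ are coprime nonlinear degrees, then $a$ is coprime to every prime dividing $b$; choosing any prime $p\mid a$ and noting $a$ itself is a nonlinear degree not divisible by any prime dividing $b$ other than those dividing $a$, one identifies a suitable vertex pair realizing an edge of $\Gamma'(G)$. I would formalize this by showing that coprimality of two nonlinear degrees directly produces two primes $p,q$ with a nonlinear character avoiding both, contradicting total disconnectedness — so all nonlinear degrees pairwise share a common prime factor.

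From here the strategy is to invoke \cite{BCLP}. That result states that if $\Gamma(G)$ is complete then $G$ is solvable; I would need the version restricted to nonlinear degrees (equivalently, I note that linear degrees equal $1$ and contribute only an isolated vertex or no vertex to the common-divisor graph, so completeness on nonlinear degrees is what the theorem effectively uses). Having shown that total disconnectedness of $\Gamma'(G)$ implies completeness of the common-divisor graph on nonlinear degrees, the solvability of $G$ follows immediately from \cite{BCLP}. I would also handle the degenerate edge cases cleanly: if $G$ has at most one nonlinear irreducible degree, or if $|\pi(|G|)| \le 1$, solvability is either trivial or vacuous.

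The main obstacle I anticipate is the careful bookkeeping in the coprimality argument of the second paragraph — making precise exactly which pair of primes $p,q$ witnesses an edge of $\Gamma'(G)$ when two nonlinear degrees are coprime. One must ensure that the witnessing nonlinear character has degree avoiding \emph{both} chosen primes simultaneously, which requires choosing the primes from the two distinct coprime degrees in a way that each chosen prime divides the \emph{other} degree's complement. Getting this matching correct, and confirming that it genuinely contradicts total disconnectedness rather than merely failing to produce an edge, is the delicate heart of the argument; once that is secured, the appeal to \cite{BCLP} is routine.
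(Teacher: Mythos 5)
Your overall route is the same as the paper's: deduce that the common-divisor degree graph $\Gamma(G)$ (on the nonlinear degrees) is complete and then quote the main theorem of \cite{BCLP}. But as written the argument has a genuine gap. The central claim --- that total disconnectedness of $\Gamma'(G)$ forces any two nonlinear degrees to share a prime --- is \emph{false} for groups whose order has exactly two prime divisors: $\SL_2(3)$ has nonlinear degrees $2$ and $3$, its graph $\Gamma'$ on the vertex set $\{2,3\}$ is totally disconnected (no nonlinear character has degree coprime to both $2$ and $3$), yet $\gcd(2,3)=1$. Your ``degenerate cases'' only cover $|\pi(|G|)|\le 1$; you must first invoke Burnside's $p^aq^b$-theorem to dispose of the two-prime case and thereby assume $m:=|\pi(|G|)|\ge 3$, which is exactly how the paper opens its proof. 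Moreover, the delicate step you defer (``I would formalize this by showing\dots'') is never actually carried out, and the sketch offered is confused: a prime $p\mid a$ can never be one of the two primes that the degree $a$ avoids, so choosing $p\mid a$ cannot by itself produce the required edge.

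Once $m\ge 3$ is secured, the clean way to finish --- and the paper's actual argument --- is a counting statement rather than a coprimality contradiction: total disconnectedness means every nonlinear degree avoids at most one prime of $\pi(|G|)$, i.e.\ is divisible by at least $m-1$ of the $m$ primes, and since $2(m-1)>m$ for $m\ge 3$ any two nonlinear degrees share a prime, so $\Gamma(G)$ is complete. Your coprimality argument can be repaired along the same lines (if a nonlinear degree $a$ avoids at least two primes it witnesses an edge of $\Gamma'(G)$; if it avoids only one, then any nonlinear degree $b$ coprime to $a$ is a power of that single prime and itself avoids $m-1\ge 2$ primes, again witnessing an edge), but this is the paper's counting argument in disguise, and without the Burnside reduction it does not go through.
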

\begin{proof}By Burnside's $p^a q^b$-theorem we may assume that the order of $G$ is divisible by at least three different primes. Since $\Gamma' (G)$ is totally disconnected, if the order of $G$ is divisible by $m$ primes, then the degree of every non-linear irreducible character of $G$ is divisible by at least $m-1$ primes. 
In particular, $\Gamma(G)$ is complete. We conclude that $G$ is solvable by \cite[Main Theorem]{BCLP}.
\end{proof}

The condition $\irrpi G ={\rm Lin }(G)$ for solvable groups was studied in \cite{NW}. Notice that Theorem \ref{NW} below does not generally hold outside solvable groups (more precisely, outside $\pi$-separable groups) as Hall $\pi$-subgroups of $G$ may not exist. 

\begin{thm}[Navarro, Wolf]\label{NW} Let $G$ be a solvable group and let $\pi$ be any set of primes. Let $H$ be a Hall $\pi$-subgroup of $G$. Then $\irrpi G={\rm Lin} (G)$ if, and only if, $\norm G H\cap G'=H'$.
\end{thm}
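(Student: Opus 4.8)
The plan is to reduce everything to the normalizer $N=\norm G H$, settle one clean computation there, and then transfer back to $G$. Since linear characters have degree $1$ we always have ${\rm Lin}(G)\subseteq\irrpi G$, so the assertion $\irrpi G={\rm Lin}(G)$ is equivalent to the numerical statement $|\irrpi G|=|G:G'|$. The first step is to determine $\irrpi N$ exactly. As $H$ is a Hall $\pi$-subgroup of $G$, it is a \emph{normal} Hall $\pi$-subgroup of $N$, and $N/H$ is a $\pi'$-group. I would first show that every $\chi\in\irrpi N$ has $H'$ in its kernel: restricting $\chi$ to $H\nor N$, each constituent $\theta$ has $\pi$-power degree dividing the $\pi'$-number $\chi(1)$, so $\theta$ is linear and $H'\le\ker\theta$; since the constituents are $N$-conjugate and $H'\nor N$, this gives $H'\le\ker\chi$. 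Conversely, in $N/H'$ the normal Hall $\pi$-subgroup $H/H'$ is abelian, so Clifford theory over it (each $\lambda\in\irr{H/H'}$ is linear, the inertia index divides $|N/H|$ and is thus a $\pi'$-number, and the ramification divides the order of a $\pi'$-section) shows that \emph{every} irreducible character of $N/H'$ has $\pi'$-degree. Hence $\irrpi N=\irr{N/H'}$, and consequently $\irrpi N={\rm Lin}(N)$ if and only if $N/H'$ is abelian, that is, if and only if $N'=H'$.

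Next I would record the group-theoretic identity $G=G'\,\norm G H$, valid for every solvable group: arguing by induction on $|G|$ and passing to $G/M$ for a minimal normal subgroup $M$, a Frattini argument yields $G=G'MN$, and the only delicate case, $M$ a $\pi'$-group with $M\not\le G'$, forces $M\le\zent G\le N$ because $[G,M]\le M\cap G'=1$. Dually, $G=G'N$ means precisely that the restriction map ${\rm Lin}(G)\to{\rm Lin}(N)$ is injective. This lets me reformulate the target condition: since $H'\le N'\le N\cap G'$ always holds, the equality $N\cap G'=H'$ is equivalent to the conjunction of $N'=H'$ (i.e. $N/H'$ abelian) and $N\cap G'=N'$, where the latter says exactly that every linear character of $N$ is the restriction of a linear character of $G$.

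The two sides now match up as follows. For the implication $\Leftarrow$, assuming $N\cap G'=H'$ gives $N/H'$ abelian, so $\irrpi N={\rm Lin}(N)$ and $|\irrpi N|=|N:N'|=|N:N\cap G'|=|G:G'|$ by $G=G'N$; it then remains to transport this count to $G$ and to see that ${\rm Lin}(G)$ corresponds to the linear part on the $N$-side. For the implication $\Rightarrow$ I would argue by contraposition: if $N\cap G'>H'$, then either $N'>H'$ — in which case $\irr{N/H'}=\irrpi N$ already contains a nonlinear character — or $N'=H'$ but $N\cap G'>N'$, so some linear character of $N$ fails to extend to $G$; in either situation I would produce a corresponding nonlinear $\pi'$-degree character of $G$.

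The crux, and the main obstacle, is exactly this transfer between character data on $G$ and on $N=\norm G H$. One attractive route is a McKay-type correspondence $\irrpi G\leftrightarrow\irrpi N$ for $\pi$-separable groups that is compatible with multiplication by ${\rm Lin}(G)$; however, for $|\pi|\ge 2$ a clean bijection of this kind is subtle and not available off the shelf, so I expect the real work to go through a direct induction on $|G|$ via Clifford theory over a minimal normal subgroup $M$. There, the characters trivial on $M$ are handled by the inductive hypothesis applied to $G/M$, while the characters lying over a nontrivial $\lambda\in\irr M$, together with the precise tracking of how the equality $N\cap G'=H'$ behaves under passage to the inertia group $I_G(\lambda)$ and to $G/M$, are where the difficulty concentrates. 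Reconciling this focal/transfer behavior of $H$ inside $G$ with the character count on the normalizer side is the step I anticipate being hardest.
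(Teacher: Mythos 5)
Your reductions are all correct: the computation $\irrpi{N}=\irr{N/H'}$ for $N=\norm G H$ (restriction to the normal Hall subgroup $H$ forces $H'\le\ker{\chi}$, and Clifford theory over the abelian normal Hall $\pi$-subgroup $H/H'$ gives the converse), the Frattini-type identity $G=G'N$, and the equivalence of $N\cap G'=H'$ with the conjunction of $N'=H'$ and $N\cap G'=N'$. But beyond these reductions the proposal proves nothing: the transfer $|\irrpi{G}|=|\irrpi{N}|$, which you yourself call ``the crux,'' is exactly where the entire content of the theorem sits, and you leave it as an anticipated induction that you never carry out. The difficulties you name --- tracking how $H$, its normalizer, and the condition $N\cap G'=H'$ behave under passage to $G/M$ and to inertia groups --- are real, and nothing in your outline resolves them. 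As written, this is a correct reduction of Theorem \ref{NW} to an unproven counting statement, not a proof.

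Moreover, your assessment of the literature is mistaken in a way that matters: the counting statement you need \emph{is} available off the shelf. For $\pi$-separable (in particular solvable) groups, the equality $|\irrpi{G}|=|\irrpi{\norm G H}|$ is Corollary 1.15 of Wolf \cite{W} --- precisely the result this paper invokes in the proof of Lemma \ref{piMcKay} (take the ``$\pi$'' of that corollary to be all primes dividing $|G|$, so that $B_\pi(X)=\irr{X}$, and its ``$\omega$'' to be the present two-element set). The paper itself does not reprove Theorem \ref{NW}; it cites Corollary 3 of Navarro--Wolf \cite{NW}, whose proof rests on this count. Once Wolf's corollary is imported, your scheme closes in both directions: if $\irrpi{G}={\rm Lin}(G)$, then
$$|G:G'|=|\irrpi{G}|=|\irrpi{N}|=|\irr{N/H'}|\geq |N:N'|\geq |N:N\cap G'|=|G:G'|,$$
where the last equality uses $G=G'N$; equality throughout forces $N/H'$ abelian and $N'=N\cap G'$, hence $N\cap G'=H'$. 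Conversely, if $N\cap G'=H'$, then $N'=H'$, so $|\irrpi{G}|=|\irrpi{N}|=|N:N\cap G'|=|G:G'|=|{\rm Lin}(G)|$, and ${\rm Lin}(G)\subseteq\irrpi{G}$ gives equality. So the missing step is not a new induction but the citation you dismissed.
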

\begin{proof}This is Corollary 3 in \cite{NW}.
\end{proof}

The aforementioned results allow us to characterize the groups $G$ with totally disconnected $\Gamma'(G)$. 

\begin{proof}[Proof of Theorem C]
 If $\Gamma'(G)$ is totally disconnected, then by Lemma \ref{BCLP}, the group $G$ is solvable and the direct implication follows from Theorem \ref{NW}. The reverse implication follows directly from Theorem \ref{NW}.
\end{proof}

We end this section describing $\Gamma'(G)$ for nilpotent groups. Notice that nilpotent groups, and therefore solvable groups, can have complete graph $\Gamma'(G)$. In fact, examples of solvable, respectively nilpotent, groups with the same set of character degrees as a perfect group are provided in \cite{N15}, respectively \cite{NR}. 

\begin{rem}\label{nilpotent}Let $G$ be a nilpotent non-abelian group of order $|G|=p_1^{a_1}\cdots p_k^{a_k}$ for primes $p_i$, $a_i>0$ and $k\geq 3$. Recalling that $G$ is the direct  product of its Sylow subgroups, we see that $\Gamma'(G)$ is complete if, and only if, at most $k-3$ Sylow subgroups of $G$ are abelian. In the case where $k-2$ Sylow subgroups are abelian, there is an edge connecting every two primes except for the primes corresponding with the non-abelian Sylow subgroups. In the case where all but one Sylow, say the Sylow $p_k$-subgroup, are abelian, the subgraph of $\Gamma'(G)$ defined by $\{ p_1, \ldots, p_{k-1} \}$ is complete and the vertex $p_k$ is isolated. \end{rem}

\section{Alternating groups}\label{alternating}

The aim of this section is to prove Theorem B for alternating groups.

\subsection{Background}
We recall some basic facts in the representation theory of symmetric
groups. Standard references for this topic are \cite{James}, \cite{JK} and \cite{OlssonBook}.
A partition $\lambda=(\lambda_1,\lambda_2,\dots,\lambda_\ell)$ is a finite
non-increasing sequence of positive integers. If $n=\sum\lambda_i$, then we say that $\lambda$ is a
partition of $n$ and we write $\lambda\vdash n$ or, sometimes, $|\lambda|=n$. We denote by $\mathcal{P}(n)$ the set of partitions of $n$.
With a slight abuse of notation, given a sequence of partitions $T=(\mu_1,\ldots, \mu_t)$ we will write $|T|$ to denote the number $|\mu_1|+\cdots+|\mu_t|$. 

The Young diagram of a partition $\lambda$ is the set $$[\lambda]=\{(i,j)\in\NN\times\NN\mid
1\leq i\leq\ell,1\leq j\leq\lambda_i\},$$ where we orient $\NN\times\NN$ with
the $x$-axis pointing right and the $y$-axis pointing down. We denote by
$\lambda'$ the conjugate partition of $\lambda$, whose Young diagram is
obtained from that of $\lambda$ by a reflection over the main
diagonal.
Given $(r,c)\in[\lambda]$, the corresponding hook $H_{(r,c)}(\lambda)$ is the
set defined by
$H_{(r,c)}(\lambda)
  =\{(r,y)\in [\lambda]\mid y\geq c\}\cup\{(x,c)\in [\lambda]\mid x\geq r\}.$
We set $h_{r,c}(\lambda)=|H_{(r,c)}(\lambda)|=1+(\lambda_r-c)+(\lambda'_c-r)$. We refer to 
$h_{r,c}(\lambda)$ as the hook-length of $H_{(r,c)}(\lambda)$.
We denote by $\cH(\lambda)$ the multiset of hook-lengths in $[\lambda]$.
For $e\in\NN$ we let
$\cH^e(\lambda)=\{(r,c)\in [\lambda]\mid e \text{ divides }h_{r,c}(\lambda)\}$.
If $(r,c)\in\cH^e(\lambda)$, then we say that $H_{(r,c)}(\lambda)$ is an $e$-hook
of $\lambda$, so that $|\cH ^e (\lambda)|$ is the number of $e$-hooks of $\lambda$.
We record here an elementary observation (see \cite[Corollary 1.7]{OlssonBook}) that will be quite useful later in this section. 
\begin{lem}\label{lem: Olseasy}
Let $e,f\in\mathbb{N}$ and suppose that $h_{r,c}(\lambda)=ef$. Then 
$|\mathcal{H}^{e}(\lambda)\cap H_{(r,c)}(\lambda)|=f.$
\end{lem}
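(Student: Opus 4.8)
The plan is to prove Lemma \ref{lem: Olseasy} by analyzing the arm and leg of the distinguished hook $H_{(r,c)}(\lambda)$ and counting how the divisibility by $e$ propagates along them. Recall that $H_{(r,c)}(\lambda)$ consists of the node $(r,c)$ together with its \emph{arm} $\{(r,y) : c < y \leq \lambda_r\}$ and its \emph{leg} $\{(x,c) : r < x \leq \lambda'_c\}$. The first observation I would record is the standard fact (from the theory of hooks, see \cite{OlssonBook}) relating the hook-lengths along the rim of the hook to those of $\lambda$: as one walks along the boundary path of $H_{(r,c)}(\lambda)$ from the end of the arm to the end of the leg, the hook-lengths of the nodes encountered decrease by exactly $1$ at each step from $h_{r,c}(\lambda) = ef$ down to $1$, each value being attained exactly once. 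Equivalently, the hook-lengths of the nodes lying in $H_{(r,c)}(\lambda)$ are precisely a certain subset of $\{1, 2, \dots, ef\}$, and the key is to understand which residues mod $e$ appear.

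The cleanest route is to use the abacus (beta-number) description. First I would fix a first-column hook-length (beta-set) encoding of $\lambda$ and recall that the hook-lengths appearing in the hook $H_{(r,c)}(\lambda)$ correspond exactly to the integers $t$ with $1 \leq t \leq h_{r,c}(\lambda)$ such that $\beta_r - t$ is \emph{not} a beta-number, where $\beta_r$ is the beta-number associated to row $r$. The multiset $\cH(\lambda)$ of all hook-lengths, and in particular the sub-multiset corresponding to nodes of a single hook, is governed by the gaps in the beta-set. Placing the beta-numbers on an $e$-runner abacus, the $e$-hooks of $\lambda$ correspond to bead–gap pairs (a bead with an empty position $e$ places above it), and the condition $(r,c) \in \cH^e(\lambda) \cap H_{(r,c)}(\lambda)$ translates into counting how many multiples of $e$ lie in the interval $\{1, \dots, ef\}$ of consecutive hook-lengths running down the hook.

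The heart of the argument is the following counting step, which I would carry out explicitly: since the hook-lengths of the nodes of $H_{(r,c)}(\lambda)$ form a set of $h_{r,c}(\lambda) = ef$ consecutive integers in the walk-down description (the value drops by exactly $1$ at each node as we traverse the hook from its arm-end through the corner to its leg-end, and $H_{(r,c)}(\lambda)$ contains exactly $ef$ nodes), the nodes of the hook whose own hook-length is divisible by $e$ are exactly those lying at positions where the running hook-length is a multiple of $e$. Among $ef$ consecutive values $ef, ef-1, \dots, 2, 1$ there are exactly $f$ multiples of $e$, namely $ef, (f-1)e, \dots, 2e, e$. Each such node $(r',c')$ of the hook with $h_{r',c'}(\lambda)$ divisible by $e$ is precisely a member of $\cH^e(\lambda) \cap H_{(r,c)}(\lambda)$, and conversely. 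Hence $|\cH^e(\lambda) \cap H_{(r,c)}(\lambda)| = f$, as claimed.

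The main obstacle I anticipate is making rigorous the assertion that the hook-lengths of the nodes of $H_{(r,c)}(\lambda)$, read in the natural traversal order, really do run through \emph{consecutive} integers $ef, ef-1, \dots, 1$ with no repetitions and no gaps, and that a node's own hook-length being divisible by $e$ coincides with its running value being a multiple of $e$; this is the content one extracts from \cite[Corollary 1.7]{OlssonBook}, but one must be careful that it is the running hook-lengths along the rim—not the arm-lengths or leg-lengths alone—that behave this way. Once this consecutiveness is established, the divisibility count is elementary. I would therefore lean on \cite{OlssonBook} for the traversal fact and devote the writing to the clean mod-$e$ counting, since that is where the value $f$ genuinely emerges.
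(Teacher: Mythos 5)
Your central claim is false: the hook-lengths of the nodes of $H_{(r,c)}(\lambda)$ do \emph{not} run through the consecutive integers $ef, ef-1,\dots,1$, each attained exactly once, as you traverse the hook. Take $\lambda=(2,2)$ and $(r,c)=(1,1)$, so $h_{1,1}(\lambda)=3$: the three nodes of $H_{(1,1)}(\lambda)$ have hook-lengths $3,2,2$, so the value $1$ is never attained and $2$ is attained twice. (For $\lambda=(3,3,1)$ the walk from arm-end to leg-end reads $2,3,5,4,1$, so the values are not even monotone along the traversal, let alone decreasing by $1$.) Since your entire counting step --- ``the nodes whose own hook-length is divisible by $e$ are exactly those at positions where the running value is a multiple of $e$'' --- rests on this consecutiveness, the argument does not go through; the bijection you assert between the $f$ multiples of $e$ in $\{1,\dots,ef\}$ and the nodes of $\mathcal{H}^e(\lambda)\cap H_{(r,c)}(\lambda)$ is unproved, and no amount of care in citing the traversal order will repair it because the premise is simply wrong.

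What is true, and what your abacus setup is actually pointing at, is a \emph{complementarity} statement. Write $h=h_{r,c}(\lambda)$, let $\beta_r$ be the bead of row $r$ and $b_0=\beta_r-h$ the gap corresponding to column $c$. The integers in $[b_0,\beta_r)$ split into gaps, which give the arm hook-lengths $\{h_{r,j}\colon c\le j\le\lambda_r\}=\{\beta_r-b\colon b\ \text{a gap}\}$, and beads $\beta_i$ with $b_0<\beta_i<\beta_r$, which correspond to the leg nodes $(i,c)$ and contribute $\beta_r-\beta_i=h-h_{i,c}$ (since $h_{i,c}=\beta_i-b_0$). Hence the arm hook-lengths together with the numbers $h-h_{i,c}$ for $r<i\le\lambda'_c$ form exactly the set $\{1,2,\dots,h\}$, with no repetitions. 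The lemma then follows, but only by using the hypothesis $e\mid h$ a second time: among $\{1,\dots,ef\}$ there are $f$ multiples of $e$, and for a leg node $e\mid(h-h_{i,c})$ if and only if $e\mid h_{i,c}$ \emph{because} $e\mid h$. This essential use of $e\mid h_{r,c}(\lambda)$ is entirely absent from your write-up, where the hypothesis is only used to count multiples of $e$ up to $ef$ --- a sign that the route as stated cannot be correct. (For comparison, the paper gives no proof at all and simply cites \cite[Corollary 1.7]{OlssonBook}; the argument just sketched is the standard one behind that reference.)
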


We denote by $\lambda - H_{(r,c)}(\lambda)$ the partition obtained by removing the $e$-hook $H_{(r,c)}(\lambda)$ from $\lambda$ (see \cite[Chapter I]{OlssonBook} for the precise definition of this process). 
The $e$-core $C_e(\la)$ of $\la$ is the partition obtained from $\la$ by
successively removing all $e$-hooks. The $e$-quotient $Q_e(\lambda)=(\lambda^{(0)}, \ldots, \lambda^{(e-1)})$ is another important combinatorial object, defined for instance in \cite[Section 3]{OlssonBook}.
The number of $e$-hooks to be removed from $\lambda$ to obtain $C_e(\lambda)$ is called the $e$-\textit{weight} $w_e(\lambda)$.
By \cite[3.6]{OlssonBook} we derive the following equations. 
\begin{equation}\label{Eq:E0}
|\lambda|=ew_e(\lambda)+|C_e(\lambda)|,\ \text{and}\ \ w_e(\lambda)=|\mathcal{H}^e(\lambda)|=|Q_e(\lambda)|=|\lambda^{(0)}|+\cdots+|\lambda^{(e-1)}|.
\end{equation}
Let $T_0^Q(\lambda)=(\lambda)$, $T_1^Q(\lambda)=Q_e(\lambda)=(\lambda^{(0)}, \ldots, \lambda^{(e-1)})$ and for $k\geq 1$ we define $T_{k+1}^Q(\lambda)$ to be the sequence of $e^{k+1}$ partitions given by 
$T_{k+1}^Q(\lambda)=((\lambda^{(i_1,\ldots, i_k)})^{(0)}, \ldots, (\lambda^{(i_1,\ldots, i_k)})^{(e-1)}),$
where $(i_1,\ldots, i_k)\in \{0,1,\ldots, e-1\}^k$. The collection of all the sequences $T_j^Q(\lambda)$ for $j\geq 0$ is known as the $e$-\textit{quotient tower} of $\lambda$. 
It is not too difficult to see that $|Q_{e^k}(\lambda)|=|T_k^Q(\lambda)|$, for all $k\in\mathbb{N}$. If $T_k^Q(\lambda)=(\mu_1,\ldots,\mu_{e^k})$ then we let $T^C_k(\lambda)=(C_e(\mu_1),\ldots,C_e(\mu_{e^k}))$. The collection of all the sequences $T_j^C(\lambda)$ for $j\geq 0$ is known as the $e$-\textit{core tower} of $\lambda$.

Let now $p$ be a prime. As shown in \cite[Chap.~II]{OlssonBook}, every partition of a given natural number is uniquely determined by its
$p$-core tower. Using the definitions given above we observe that for all $k\in\mathbb{N}_0$ we have that

\begin{equation}\label{Eq:E1}
|\mathcal{H}^{p^k}(\lambda)|=|T^Q_k(\lambda)|=\sum_{j\geq k}|T^C_j(\lambda)|p^{j-k},\ \ \text{in particular}\ \ |\lambda|=\sum_{j\geq 0}|T^C_j(\la)|p^j.
\end{equation}


Partitions of $n$ correspond canonically to the irreducible characters of
${\sf S}_n$. We denote by $\chi^\la$ the irreducible character naturally labelled
by $\la\vdash n$. 
We use the notation $\lambda\vdash_{p'}n$ to say that $\chi^\lambda\in\mathrm{Irr}_{p'}({\sf S}_n)$.
We recall that $(\chi^\la)_{{\sf A}_n}$ is irreducible if,
and only if, $\la\neq \la'$. Otherwise $(\chi^\la)_{{\sf A}_n}=\phi+\phi^{g}$ for
some $\phi\in\Irr({\sf A}_n)$ and $g\in{\sf S}_n\smallsetminus{\sf A}_n$.
(See \cite[Thm.~2.5.7]{JK}.)
The following result was first proved by MacDonald \cite{Mac} and it is
crucial for our purposes. 

\begin{thm}  \label{theo: mac}
 Let $p$ be a prime and let $n$ be a natural number with $p$-adic expansion
 $n=\sum_{j=0}^ka_jp^j$. Let $\la$ be a partition of $n$. Then
 $$\nu_p(\chi^\la(1))
   =\big(\sum_{j\geq 0}|T^C_j(\la)|-\sum_{j=0}^ka_j\big)/(p-1).$$
Moreover, $\nu_p(\chi^\la(1))=0$ if, and only if, $|T^C_j(\la)|=a_j$ for all $j\in\mathbb{N}_{0}$.
\end{thm}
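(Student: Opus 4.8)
The plan is to combine the Frame--Robinson--Thrall hook length formula with the two identities for hook counts recorded in \eqref{Eq:E1}. Since $\chi^\lambda(1)=n!/\prod_{(r,c)\in[\lambda]}h_{r,c}(\lambda)$, we have
$$\nu_p(\chi^\lambda(1))=\nu_p(n!)-\sum_{(r,c)\in[\lambda]}\nu_p(h_{r,c}(\lambda)).$$
The first term is given by Legendre's formula, namely $\nu_p(n!)=(n-\sum_{j=0}^k a_j)/(p-1)$. For the second term I would write $\nu_p(h_{r,c}(\lambda))=\#\{i\geq 1\mid p^i\text{ divides }h_{r,c}(\lambda)\}$ and reverse the order of counting, so that, directly from the definition of $\mathcal{H}^{p^i}(\lambda)$,
$$\sum_{(r,c)\in[\lambda]}\nu_p(h_{r,c}(\lambda))=\sum_{i\geq 1}\#\{(r,c)\in[\lambda]\mid p^i\text{ divides }h_{r,c}(\lambda)\}=\sum_{i\geq 1}|\mathcal{H}^{p^i}(\lambda)|.$$

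The remaining step is purely formal. Substituting the first equality of \eqref{Eq:E1}, that is $|\mathcal{H}^{p^i}(\lambda)|=\sum_{j\geq i}|T^C_j(\lambda)|p^{j-i}$, and interchanging the two sums over the range $1\leq i\leq j$ yields
$$\sum_{i\geq 1}|\mathcal{H}^{p^i}(\lambda)|=\sum_{j\geq 1}|T^C_j(\lambda)|\sum_{i=1}^{j}p^{j-i}=\sum_{j\geq 1}|T^C_j(\lambda)|\,\frac{p^j-1}{p-1}.$$
Feeding this, together with the Legendre expression and the identity $n=\sum_{j\geq 0}|T^C_j(\lambda)|p^j$ from the second part of \eqref{Eq:E1}, into the displayed formula for $\nu_p(\chi^\lambda(1))$, the terms $|T^C_j(\lambda)|p^j$ with $j\geq 1$ cancel and one is left precisely with $(\sum_{j\geq 0}|T^C_j(\lambda)|-\sum_{j=0}^k a_j)/(p-1)$. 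None of this is an obstacle: it is bookkeeping once \eqref{Eq:E1} is available.

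For the ``moreover'' assertion, observe that the formula shows $\nu_p(\chi^\lambda(1))=0$ is equivalent to $\sum_{j\geq 0}|T^C_j(\lambda)|=\sum_{j=0}^k a_j$. Here I would compare the two nonnegative-integer expansions of $n$ in ``base $p$'': the genuine $p$-adic one $n=\sum_j a_j p^j$ with $0\leq a_j\leq p-1$, and $n=\sum_j|T^C_j(\lambda)|p^j$. Among all ways of writing $n=\sum_j b_j p^j$ with $b_j\in\mathbb{N}_0$, the digit sum $\sum_j b_j$ is minimized exactly by the standard expansion: whenever some $b_j\geq p$ one may carry, replacing $b_j p^j$ by $(b_j-p)p^j+p^{j+1}$, which strictly lowers the digit sum by $p-1$. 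Hence $\sum_j|T^C_j(\lambda)|\geq\sum_j a_j$, and equality forces every $|T^C_j(\lambda)|$ into $\{0,\dots,p-1\}$, whence $|T^C_j(\lambda)|=a_j$ for all $j$ by uniqueness of the $p$-adic expansion. This minimality-and-uniqueness argument is the only delicate point, since it is the direction ``equal digit sums force equal digits'' that upgrades the criterion to an honest equivalence; the valuation formula itself reduces the statement to the elementary identities already recorded.
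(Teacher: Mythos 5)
Your proof is correct, but there is nothing in the paper to compare it against: the paper does not prove Theorem \ref{theo: mac} at all, it simply quotes it from Macdonald \cite{Mac}. What you have written is a complete, essentially self-contained proof of that cited result, and it is the standard one (close to the treatment in \cite{OlssonBook}). The skeleton is sound at every step: the hook length formula turns $\nu_p(\chi^\lambda(1))$ into $\nu_p(n!)-\sum_{(r,c)\in[\lambda]}\nu_p(h_{r,c}(\lambda))$; counting, for each box, the powers $p^i$ dividing its hook length correctly converts the second term into $\sum_{i\geq 1}|\mathcal{H}^{p^i}(\lambda)|$; the interchange of summation using \eqref{Eq:E1} gives $\sum_{j\geq 1}|T^C_j(\lambda)|\,(p^j-1)/(p-1)$, and combining this with Legendre's formula and the expansion $n=\sum_{j\geq 0}|T^C_j(\lambda)|p^j$ cancels the terms $|T^C_j(\lambda)|p^j$ exactly as you claim, leaving Macdonald's formula. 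Your treatment of the ``moreover'' clause is also right, and you correctly isolate the one genuinely delicate point: from $\sum_{j}|T^C_j(\lambda)|=\sum_{j}a_j$ one needs that the $p$-adic expansion is the \emph{unique} minimizer of the digit sum among all representations $n=\sum_j b_jp^j$ with $b_j\in\mathbb{N}_0$; your carrying argument (each carry lowers the digit sum by exactly $p-1$, and a representation with some $b_j\geq p$ therefore has strictly larger digit sum than the standard one) establishes this, after which uniqueness of the $p$-adic expansion forces $|T^C_j(\lambda)|=a_j$ for all $j$. The only flag is cosmetic: you should cite the Frame--Robinson--Thrall hook length formula (e.g.\ \cite{JK} or \cite{OlssonBook}), since the paper never states it and your argument opens with it.
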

Here for a natural number $m$, we denoted by $\nu_p(m)$ the maximal
integer $k$ such that $p^k$ divides $m$. We will keep this notation for the rest of the article. 

A useful consequence of Theorem \ref{theo: mac} is stated in the following lemma. This is well-known to experts in the field.  
For the reader's convenience, we only include a brief proof. 

\begin{lem}   \label{lem: last layer}
 Let $p, n$ and $\la$ be as in Theorem~\ref{theo: mac}. Then $\la\vdash_{p'}n$ if, and only if, $|\cH^{p^k}(\la)|=a_k$ and $C_{p^k}(\lambda)\vdash_{p'}n-a_kp^k$. 
\end{lem}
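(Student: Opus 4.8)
The plan is to deduce both equivalences directly from Theorem~\ref{theo: mac} together with the combinatorial identities in~\eqref{Eq:E1}. Recall that $\la\vdash_{p'}n$ means $\nu_p(\chi^\la(1))=0$, which by the moreover-clause of Theorem~\ref{theo: mac} is equivalent to the full system of equalities $|T^C_j(\la)|=a_j$ for \emph{every} $j\in\mathbb{N}_0$, where $n=\sum_{j=0}^k a_jp^j$ is the $p$-adic expansion. So the content of the lemma is to show that this entire system of equalities is captured by the two stated conditions on the top layer alone.

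First I would prove the forward direction. Assume $\la\vdash_{p'}n$, so $|T^C_j(\la)|=a_j$ for all $j$. Setting $j=k$ gives $|T^C_k(\la)|=a_k$. By the first identity in~\eqref{Eq:E1} applied with the top exponent, $|\cH^{p^k}(\la)|=\sum_{j\geq k}|T^C_j(\la)|p^{j-k}$; but since $a_j=0$ for $j>k$ (as $a_k$ is the leading digit of the $p$-adic expansion), the sum collapses to the single term $|T^C_k(\la)|=a_k$, giving $|\cH^{p^k}(\la)|=a_k$. For the core condition, I would use the standard fact that the $e$-core tower of $C_e(\la)$ agrees with that of $\la$ in all layers below the top one (removing $p^k$-hooks affects only the top layer of the $p^k$-level data, hence only the $k$-th layer of the $p$-core tower). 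Concretely, $C_{p^k}(\la)$ has $|C_{p^k}(\la)|=n-a_kp^k=\sum_{j=0}^{k-1}a_jp^j$ by~\eqref{Eq:E0} and the vanishing of the top layer after coring, and its core-tower layers coincide with those of $\la$ for $j<k$; thus $|T^C_j(C_{p^k}(\la))|=|T^C_j(\la)|=a_j$ for all relevant $j$, which by the moreover-clause of Theorem~\ref{theo: mac} (applied to the integer $n-a_kp^k$ whose leading digit is now in position $k-1$) says exactly $C_{p^k}(\la)\vdash_{p'}n-a_kp^k$.

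For the reverse direction, assume $|\cH^{p^k}(\la)|=a_k$ and $C_{p^k}(\la)\vdash_{p'}n-a_kp^k$. The second hypothesis, via Theorem~\ref{theo: mac}, yields $|T^C_j(\la)|=a_j$ for all $j<k$ (using again that coring by $p^k$-hooks leaves the lower layers of the $p$-core tower unchanged, so the core tower of $C_{p^k}(\la)$ reproduces that of $\la$ below layer $k$). The first hypothesis combined with~\eqref{Eq:E1} gives $a_k=|\cH^{p^k}(\la)|=\sum_{j\geq k}|T^C_j(\la)|p^{j-k}$. Since all summands are non-negative integers and the $j=k$ term is $|T^C_k(\la)|$, this forces $|T^C_k(\la)|=a_k$ and $|T^C_j(\la)|=0$ for all $j>k$. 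Hence $|T^C_j(\la)|=a_j$ holds for every $j$ (with $a_j=0$ for $j>k$), and the moreover-clause of Theorem~\ref{theo: mac} gives $\nu_p(\chi^\la(1))=0$, i.e.\ $\la\vdash_{p'}n$.

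The main obstacle I anticipate is making precise and rigorous the repeatedly-invoked claim that removing $p^k$-hooks (equivalently, passing to $C_{p^k}(\la)$) alters only the top layer of the relevant tower while preserving all lower layers of the $p$-core tower. This is the conceptual heart of the argument and rests on the compatibility between the $p^k$-quotient/core construction and the iterated $p$-core tower recorded in the paragraph defining $T^Q_k$ and $T^C_k$; everything else is bookkeeping with~\eqref{Eq:E0} and~\eqref{Eq:E1}. Since the paper states it only wants a brief proof and regards the result as well-known, I would keep the tower-compatibility step at the level of a citation to \cite[Chap.~II]{OlssonBook} rather than reproving it.
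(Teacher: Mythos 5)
Your argument is correct and follows essentially the same route as the paper: both directions reduce to the ``moreover'' clause of Theorem~\ref{theo: mac} via the identities in \eqref{Eq:E1} and the compatibility of core towers under $p^k$-coring, for which the paper cites \cite[Theorem 3.3]{OlssonBook}. The only point worth tightening is your claim that $a_k=\sum_{j\geq k}|T^C_j(\la)|p^{j-k}$ together with non-negativity of the summands ``forces'' $|T^C_j(\la)|=0$ for $j>k$: non-negativity alone does not give this, but either the fact that $a_k<p$ (it is a $p$-adic digit, so any nonzero term with $j>k$ would already exceed it) or, as the paper argues, the observation that $p^{k+1}>n=\sum_{j\geq 0}|T^C_j(\la)|p^j$, closes this immediately.
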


\begin{proof}
Assume that $|\cH^{p^k}(\la)|=a_k$ and that $C_{p^k}(\lambda)\vdash_{p'}n-a_kp^k$.
Since $p^{k+1}>n$, it follows that $|T^C_{s}(\lambda)|=0$ for all $s>k$. Hence using equation \eqref{Eq:E1} above, we get that $a_k=|\mathcal{H}^{p^k}(\lambda)|=|T^C_k(\lambda)|$. By \cite[Theorem 3.3]{OlssonBook} one deduces that $T_j^C(C_{p^k}(\lambda))=T_j^C(\lambda)$ for all $0\leq j<k$. Using Theorem \ref{theo: mac}, we deduce that $\la\vdash_{p'}n$. The converse implication is a direct consequence of Theorem \ref{theo: mac}.
\end{proof}

Let $\mathcal{L}(n):=\{(n-x,1^x)\ |\ 0\leq x\leq n-1\}$ be the set all the \textit{hook partitions} of the natural number $n$. The following fact is an immediate consequence of Lemma \ref{lem: last layer}.

\begin{lem}\label{hooks}
Let $k\in\mathbb{N}$ and let $p$ be a prime. Then $\chi^\lambda\in\mathrm{Irr}_{p'}({\sf S}_{p^k})$ if, and only if, $\lambda\in\mathcal{L}(p^k)$. 
\end{lem}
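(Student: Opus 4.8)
The plan is to apply Lemma~\ref{lem: last layer} with $n=p^k$ and exploit the fact that the $p$-adic expansion of $p^k$ is extremely simple: its only nonzero digit is $a_k=1$, while $a_j=0$ for all $0\leq j<k$. First I would set up the forward direction. Suppose $\chi^\lambda\in\mathrm{Irr}_{p'}({\sf S}_{p^k})$. By Lemma~\ref{lem: last layer}, this forces $|\cH^{p^k}(\lambda)|=a_k=1$ and $C_{p^k}(\lambda)\vdash_{p'}p^k-a_kp^k=0$. The condition $C_{p^k}(\lambda)\vdash_{p'}0$ simply says the $p^k$-core is the empty partition, and $|\cH^{p^k}(\lambda)|=1=w_{p^k}(\lambda)$ via \eqref{Eq:E0} says $\lambda$ has exactly one $p^k$-hook; since removing that single hook yields the empty partition, $\lambda$ itself is a single $p^k$-hook, i.e.\ $|\lambda|=p^k$ and $\lambda$ has exactly one hook of length $p^k$.

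The crux is then the purely combinatorial fact that the only partitions of $p^k$ possessing a hook equal to their whole size are the hook partitions $(p^k-x,1^x)$. I would argue this directly: a partition $\lambda\vdash m$ has a hook of length $m$ if and only if $\lambda\in\mathcal{L}(m)$. Indeed, the hook-length formula gives $h_{1,1}(\lambda)=\lambda_1+\lambda'_1-1$, which equals $m=|\lambda|$ precisely when $\lambda_1+\lambda'_1-1=\sum_i\lambda_i$, forcing every cell to lie in the first row or first column; equivalently $\lambda$ is a hook. (Any interior cell would make $|\lambda|>\lambda_1+\lambda'_1-1$.) This identifies the partitions with $w_{p^k}(\lambda)=1$ and empty $p^k$-core exactly as the elements of $\mathcal{L}(p^k)$.

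For the converse, I would take $\lambda=(p^k-x,1^x)\in\mathcal{L}(p^k)$ and verify the two hypotheses of Lemma~\ref{lem: last layer}. Such a hook has $h_{1,1}(\lambda)=p^k$, so by Lemma~\ref{lem: Olseasy} (with $e=p^k$, $f=1$) we get $|\cH^{p^k}(\lambda)\cap H_{(1,1)}(\lambda)|=1$; and since every cell of a hook partition lies in $H_{(1,1)}(\lambda)$, this gives $|\cH^{p^k}(\lambda)|=1=a_k$. Removing this unique $p^k$-hook leaves the empty partition, so $C_{p^k}(\lambda)=\emptyset\vdash_{p'}0$. Both conditions of Lemma~\ref{lem: last layer} hold, hence $\chi^\lambda\in\mathrm{Irr}_{p'}({\sf S}_{p^k})$.

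The main obstacle is not analytic but bookkeeping: cleanly translating the statement ``$|\cH^{p^k}(\lambda)|=1$ and $C_{p^k}(\lambda)=\emptyset$'' into ``$\lambda$ is a single $p^k$-hook'' and then into the explicit hook-partition description, while making sure the degenerate edge cases (the rows $x=0$ and $x=p^k-1$, i.e.\ $\lambda=(p^k)$ and $\lambda=(1^{p^k})$) are covered. Everything reduces to the elementary hook-length identity $h_{1,1}(\lambda)=\lambda_1+\lambda'_1-1$ together with Lemma~\ref{lem: Olseasy}, so once the combinatorics of ``maximal hook equals total size iff hook partition'' is pinned down, the proof is short.
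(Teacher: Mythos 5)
Your proof is correct and follows exactly the route the paper intends: the paper simply declares the lemma ``an immediate consequence of Lemma~\ref{lem: last layer}'', and your argument fills in precisely that deduction, using the $p$-adic expansion $a_k=1$, $a_j=0$ for $j<k$, together with the elementary identity $h_{1,1}(\lambda)=\lambda_1+\lambda_1'-1$ to identify the partitions with a single $p^k$-hook and empty $p^k$-core as the elements of $\mathcal{L}(p^k)$. Both directions and the edge cases $\lambda=(p^k)$, $\lambda=(1^{p^k})$ are handled correctly, so nothing is missing.
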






\begin{lem}\label{lem: oldlayer}
Let $k\in\mathbb{N}_{>0}$ and let $\varepsilon\in\{0,1\}$. Let $n=2^k+\varepsilon$ and let $\lambda\vdash n$. If $\mathcal{H}^{2^{k}}(\lambda)=\emptyset$ and $|\mathcal{H}^{2^{k-1}}(\lambda)|=2$ then $\nu_2(\chi^\lambda(1))=1$. 
\end{lem}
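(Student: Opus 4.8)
The plan is to translate the two hypotheses about hook counts into information about the layers of the $2$-core tower of $\lambda$ by means of equation \eqref{Eq:E1}, and then feed this directly into MacDonald's formula (Theorem~\ref{theo: mac}) with $p=2$.

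First I would apply \eqref{Eq:E1} at level $k$. Since $|\mathcal{H}^{2^k}(\lambda)|=\sum_{j\geq k}|T^C_j(\lambda)|2^{j-k}=0$ and every summand is a non-negative integer with coefficient $2^{j-k}\geq 1$, this forces $|T^C_j(\lambda)|=0$ for all $j\geq k$. Applying \eqref{Eq:E1} next at level $k-1$, the sum $\sum_{j\geq k-1}|T^C_j(\lambda)|2^{j-(k-1)}$ collapses to its single surviving term, namely the $j=k-1$ term with coefficient $2^0=1$, so the hypothesis $|\mathcal{H}^{2^{k-1}}(\lambda)|=2$ yields $|T^C_{k-1}(\lambda)|=2$.

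The remaining layers are pinned down by the identity $n=\sum_{j\geq 0}|T^C_j(\lambda)|2^j$ from \eqref{Eq:E1}. Substituting the values just obtained gives $\sum_{j=0}^{k-2}|T^C_j(\lambda)|2^j=n-2\cdot 2^{k-1}=n-2^k=\varepsilon$. When $\varepsilon=0$ all these lower layers vanish, so $\sum_{j\geq 0}|T^C_j(\lambda)|=2$; when $\varepsilon=1$, since $2^j\geq 2$ for $j\geq 1$, the only possibility is $|T^C_0(\lambda)|=1$ with all other lower layers zero, so $\sum_{j\geq 0}|T^C_j(\lambda)|=3$.

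Finally I would invoke Theorem~\ref{theo: mac} with $p=2$. The binary expansion of $n=2^k+\varepsilon$ has digit sum $\sum_{j}a_j=1+\varepsilon$, whence $\nu_2(\chi^\lambda(1))=\sum_{j\geq 0}|T^C_j(\lambda)|-(1+\varepsilon)$, which equals $2-1=1$ for $\varepsilon=0$ and $3-2=1$ for $\varepsilon=1$. In both cases $\nu_2(\chi^\lambda(1))=1$, as desired. There is no serious obstacle here: the argument is a direct bookkeeping exercise, and the only point requiring any care is the elementary observation that a sum $\sum_{j}|T^C_j(\lambda)|2^j$ with non-negative integer coefficients is determined layer-by-layer once its value is known, which is exactly what separates the two cases for $\varepsilon$.
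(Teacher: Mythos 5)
Your proof is correct and follows essentially the same route as the paper: both arguments use equation \eqref{Eq:E1} to pin down all layers of the $2$-core tower (forcing $|T^C_j(\lambda)|=0$ for $j\geq k$, $|T^C_{k-1}(\lambda)|=2$, and total layer sum $2+\varepsilon$) and then conclude via Theorem~\ref{theo: mac}. The only cosmetic difference is that you determine the bottom layers from the identity $n=\sum_j|T^C_j(\lambda)|2^j$, whereas the paper gets $|T^C_0(\lambda)|=\varepsilon$ by observing that $C_{2^{k-1}}(\lambda)\vdash\varepsilon$ and that removing $2^{k-1}$-hooks preserves the $2$-core; both are equally valid bookkeeping.
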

\begin{proof}
By equations \eqref{Eq:E0} we have that $n=|C_{2^{k-1}}(\lambda)|+2^{k-1}|\mathcal{H}^{2^{k-1}}(\lambda)|$. It follows that $|C_{2^{k-1}}(\lambda)|=\varepsilon$. In particular $C_{2^{k-1}}(\lambda)$ is a $2$-core partition. 
Since removing a $2^{k-1}$-hook does not change the $2$-core of a partition, we have that $C_2(\lambda)=C_2(C_{2^{k-1}}(\lambda))\vdash\varepsilon$. Thus
$|T^C_0(\lambda)|=|C_2(\lambda)|=\varepsilon$. 
Moreover, using equations \eqref{Eq:E1}, we see that $|T^C_j(\lambda)|=0$ for all $j\geq k$. Hence, again by equations \eqref{Eq:E1} we conclude that
$$|T^C_{k-1}(\lambda)|=|T^Q_{k-1}(\lambda)|=|\mathcal{H}^{2^{k-1}}(\lambda)|=2,\ \text{and hence}\ |T^C_{j}(\lambda)|=0\ \text{for all}\ 1\leq j\leq k-2.$$
It follows that $\sum_{j\geq 0}|T_j^C(\la)|=2+\varepsilon$, while $\sum_{j=0}^ka_j=1+\varepsilon$. The statement now follows from Theorem \ref{theo: mac}.
\end{proof}

%

\subsection{Main Results}
We are now ready to prove Theorem B for alternating groups. 

\begin{thm}   \label{teo: MainEu}
 Let $n\geq 5$ and let  $q$ and $p$ be distinct primes such that $2\leq q, p\leq n$. Let $\pi=\{p,q\}$.
Then $\mathrm{Irr}_{\pi'}({\sf S}_n)=\mathrm{Lin}({\sf S}_n)$ if, and only if, 
$q=2$ and 
$$n=\begin{cases} 2^k=p^m+1 & \mathrm{or},\\
2^{k}+1=p^m.\end{cases}$$
\end{thm}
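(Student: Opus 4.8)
The two linear characters $\chi^{(n)}=1_{{\sf S}_n}$ and $\chi^{(1^n)}$ have degree $1$, hence lie in $\mathrm{Irr}_{\pi'}({\sf S}_n)$ for every $\pi$; the content of the statement is that under the stated arithmetic conditions these are the \emph{only} $\pi'$-degree characters, and that otherwise a non-linear one exists. The plan is to prove the two implications separately, and I expect the ``only if'' direction to carry essentially all of the difficulty.

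For the ``if'' direction I would argue as follows. Suppose first $q=2$ and $n=2^k=p^m+1$. Any $\chi^\lambda\in\mathrm{Irr}_{\pi'}({\sf S}_n)$ has odd degree, so $\chi^\lambda\in\mathrm{Irr}_{2'}({\sf S}_{2^k})$, and Lemma~\ref{hooks} (applied with the prime $2$) forces $\lambda=(n-x,1^x)$ to be a hook, of degree $\binom{2^k-1}{x}=\binom{p^m}{x}$. By Kummer's theorem $p\mid\binom{p^m}{x}$ for every $0<x<p^m$, so $\chi^\lambda$ fails to have $p'$-degree unless $x\in\{0,p^m\}$, i.e.\ unless $\chi^\lambda$ is linear. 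In the second case $n=2^k+1=p^m$ I would instead apply Lemma~\ref{hooks} with the prime $p$: now a $\pi'$-character is a hook of degree $\binom{p^m-1}{x}=\binom{2^k}{x}$, which is even for every $0<x<2^k$, again leaving only the two linear characters. Thus in both cases $\mathrm{Irr}_{\pi'}({\sf S}_n)=\mathrm{Lin}({\sf S}_n)$.

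For the ``only if'' direction I would proceed by contraposition: assuming $n$ (with the given $\pi$) has neither of the two shapes, I must exhibit a non-linear $\chi^\lambda$ whose degree is coprime to both $p$ and $q$. By Theorem~\ref{theo: mac}, $r\nmid\chi^\lambda(1)$ is equivalent to $|T^C_j(\lambda)|=a_j$ for all $j$, where $n=\sum_j a_jr^j$; so the task is to produce a single partition whose $p$-core tower and $q$-core tower \emph{simultaneously} realise the base-$p$ and base-$q$ digit profiles of $n$. I would organise this into three steps: (i) show $2\in\pi$, so that we may take $q=2$; (ii) show that $n\in\{2^k,2^k+1\}$ is forced; and (iii) in these boundary cases, force the prime-power conditions $2^k-1=p^m$ and $2^k+1=p^m$. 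Step (iii) for $n=2^k$ is clean, since there all odd-degree characters are hooks (Lemma~\ref{hooks}) and the requirement that every non-linear one be divisible by $p$ reduces, via the classical fact that $p\mid\binom{M}{x}$ for all $0<x<M$ exactly when $M$ is a power of $p$, to $2^k-1=p^m$; when this fails the offending hook is itself the desired witness.

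The genuine obstacle lies in steps (i), (ii), and the case $n=2^k+1$ of (iii), all of which require \emph{non-hook} partitions: already for $n=7$, $\pi=\{2,3\}$ every hook degree $\binom{6}{x}$ is divisible by $2$ or $3$, yet $\chi^{(4,2,1)}$ has degree $35$ and witnesses the failure. Here I would construct $\lambda$ by prescribing its low core-tower layers to match the digits of $n$ in both bases, controlling the $2$-adic valuation in the delicate range $n=2^k+\varepsilon$ by means of Lemma~\ref{lem: oldlayer} together with equations \eqref{Eq:E0}--\eqref{Eq:E1}, and the $p$-adic valuation by Theorem~\ref{theo: mac}, and finally check that the resulting character is non-linear. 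Controlling two independent prime valuations with a single partition, together with the reductions establishing (i) and (ii), is where I expect the real work to be; the hook-based arguments, by contrast, are routine consequences of Lemma~\ref{hooks} and Kummer's theorem.
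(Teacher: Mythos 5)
Your ``if'' direction is complete, correct, and essentially identical to the paper's: reduce to hook partitions via Lemma~\ref{hooks}, then eliminate the non-linear hooks with the other prime through the binomial coefficients $\binom{n-1}{x}$. The gap is in the ``only if'' direction, which you yourself identify as carrying ``essentially all of the difficulty'': you describe what a witness partition must accomplish (realise the base-$p$ and base-$q$ digit profiles of $n$ simultaneously in its core towers), but you never produce one, and producing one is the entire content of this half of the theorem. The paper's key device, absent from your proposal, is the explicit partition
$$\lambda=\bigl(n-b_1q^{k_1},\; n-a_1p^{m_1}+1,\; 1^{b_1q^{k_1}-(n-a_1p^{m_1}+1)}\bigr),$$
where $a_1p^{m_1}$ and $b_1q^{k_1}$ are the leading terms of the $p$-adic and $q$-adic expansions of $n$ and (say) $b_1q^{k_1}<a_1p^{m_1}$. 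This shape is rigged so that $h_{1,1}(\lambda)=a_1p^{m_1}$ and $h_{2,1}(\lambda)=b_1q^{k_1}$; Lemma~\ref{lem: Olseasy} together with the weight bound from equations \eqref{Eq:E0} then pins down $|\mathcal{H}^{p^{m_1}}(\lambda)|=a_1$ and $|\mathcal{H}^{q^{k_1}}(\lambda)|=b_1$, the two cores are the one-row partitions $(n-a_1p^{m_1})$ and $(n-b_1q^{k_1})$, and Lemma~\ref{lem: last layer} applies to both primes at once. That single construction settles every $n$ except the degenerate family $n=ap^{m}=bq^{k}+1$, which the paper then handles with a handful of further explicit partitions and which, when $a=b=1$ and $q=2$, yields exactly the exceptions in the statement. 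Without some such construction, your steps (i) and (ii) are unsupported assertions rather than reductions.

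A smaller but genuine misdirection: you propose to control the $2$-adic valuation ``in the delicate range $n=2^k+\varepsilon$'' using Lemma~\ref{lem: oldlayer}. That lemma produces characters with $\nu_2(\chi^\lambda(1))=1$, i.e.\ of \emph{even} degree; in the paper it is used only for Corollary~\ref{cor: EuMain}, where one wants the degree to halve upon restriction to ${\sf A}_n$. For Theorem~\ref{teo: MainEu} you need $\nu_2(\chi^\lambda(1))=0$, so that lemma cannot supply the witnesses you want here.
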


\begin{proof}
We aim to construct a partition $\lambda\in\mathcal{P}(n)\smallsetminus\{(n),(1^n)\}$ such that $\chi^\lambda\in\mathrm{Irr}_{\pi'}({\sf S}_n)$.
Let $$\sum_{i=1}^ta_ip^{m_i}=n=\sum_{i=1}^rb_iq^{k_i},$$
be the $p$-adic and respectively the $q$-adic expansions of $n$, where $m_1>m_2>\cdots m_t\geq 0$ and $k_1>k_2>\cdots k_r\geq 0$. 
It is clear that $b_1q^{k_1}\neq a_1p^{m_1}$.

Suppose that $b_1q^{k_1}< a_1p^{m_1}$ (the statement in the opposite setting will be proved just by swapping $p$ with $q$, $a_1$ with $b_1$ and $m_1$ with $k_1$ in what follows). 
Consider the partition $\lambda$ of $n$ defined by:
$$\lambda=(n-b_1q^{k_1}, n-a_1p^{m_1}+1, 1^{b_1q^{k_1}-(n-a_1p^{n_1}+1)}).$$
Observe that $\lambda$ is a well-defined partition as $\lambda_1\geq \lambda_2\geq\lambda_j=1$, for all $j\geq 3$.
Since $h_{1,1}(\lambda)=a_1p^{m_1}$, we deduce that $|\mathcal{H}^{p^{m_1}}(\lambda)|\geq a_1$ from Lemma \ref{lem: Olseasy}. By equations \eqref{Eq:E0} we know that $|\mathcal{H}^{p^{m_1}}(\lambda)|=w_{p^{m_1}}(\lambda)\leq a_1$. It follows that $|\mathcal{H}^{p^{m_1}}(\lambda)|=a_1$.
Moreover $$C_{p^{m_1}}(\lambda)=\lambda - H_{1,1}(\lambda)=(n-a_1p^{m_1})\vdash_{p'}n-a_1p^{m_1}.$$ Using Lemma \ref{lem: last layer}, we conclude that $\chi^\lambda\in\mathrm{Irr}_{p'}({\sf S}_n)$. 
On the other hand, using a similar argument, we can also show that $\chi^\lambda\in\mathrm{Irr}_{q'}({\sf S}_n)$. This follows from Lemma \ref{lem: last layer} because $h_{2,1}(\lambda)=b_1q^{k_1}$, hence $|\mathcal{H}^{q^{k_1}}(\lambda)|=w_{q^{k_1}}(\lambda)=b_1$, and again it is routine to check that $C_{q^{k_1}}(\lambda)=(n-b_1q^{k_1})\vdash_{q'}n-b_1q^{k_1}$.
We obtain that $\chi^\lambda\in\mathrm{Irr}_{\pi'}({\sf S}_n)$. To conclude we need to make sure that $\chi^\lambda\notin\mathrm{Lin}({\sf S}_n)$ (i.e. $\lambda\notin\{(n), (1^n)\}$). 

Since $\lambda_2\geq 1$ we notice that $\lambda\in\{(n), (1^n)\}$ if ,and only if, $\lambda=(1^n)$ and this happens if, and only if, $n-a_1p^{m_1}=\lambda_2-1=0$ and $n-b_1q^{k_1}=\lambda_1=1$. Equivalently, $\lambda\in\{(n), (1^n)\}$ if, and only if,
$$n=ap^{m}=bq^{k}+1.$$
(To ease the notation we renamed $a_1, m_1, b_1$ and $k_1$ by $a, m, b$ and $k$ respectively.) 
 In this very specific situation $\lambda=(1^n)$,
hence we need to pick a different partition. In order to make this new choice we distinguish two main cases, depending on $2\in\{p,q\}$ or not. 

Let us first assume that $2\notin\{p,q\}$. 
If $b\geq 2$ then let $\mu\vdash n$ be defined as follows: $$\mu=(1+(b-1)q^{k}, 1^{q^{k}}).$$
Notice that $h_{1,1}(\mu)=ap^{m}$, $h_{1,2}(\mu)=(b-1)q^{k}$ and $h_{2,1}(\mu)=q^{k}$. Hence from Lemma \ref{lem: Olseasy} we obtain that $|\mathcal{H}^{p^{m}}(\mu)|=a$ and that $|\mathcal{H}^{q^{k}}(\mu)|=b$. Moreover, $C_{p^{m}}(\mu)=\emptyset$ and $C_{q^{k}}(\mu)=(1)$. Using Lemma \ref{lem: last layer} we deduce that $\chi^\mu\in\mathrm{Irr}_{\pi'}({\sf S}_n)$. 
Otherwise if $b=1$, then  $a=2c$ is even and we let 
$$\mu=(cp^{m}, 2, 1^{cp^{m}-2}).$$
It is now routine to check that $h_{1,1}(\mu)=q^{k}$, $h_{1,2}(\mu)=h_{2,1}(\mu)=cp^{m}$, $|\mathcal{H}^{p^{m}}(\mu)|=a$, $|\mathcal{H}^{q^{k}}(\mu)|=1$, $C_{p^{m}}(\mu)=\emptyset$ and $C_{q^{k}}(\mu)=(1)$.
Lemma \ref{lem: last layer} implies that $\chi^\mu\in\mathrm{Irr}_{\pi'}({\sf S}_n)$.  

\medskip

To conclude we now analyze the case where $2\in\{q,p\}$. 
We have two substantially different cases to consider. 
Namely $n=ap^{m}=2^{k}+1$ and $n=ap^{m}+1=2^{k}$.

\smallskip

\textbf{-} If $n=2^k=ap^m+1$ then $\chi^\mu\in\mathrm{Irr}_{2'}({\sf S}_n)$ if, and only if, $\mu\in\mathcal{L}(n)$, by Lemma \ref{hooks}. 

\noindent If $a>1$ then we let $\mu$ be the partition of $n$ defined as follows.
$$\mu=(1+(a-1)p^m, 1^{p^m})\in\mathcal{L}(n)\smallsetminus\{(n), (1^n)\}.$$ 
We observe that $h_{1,2}(\mu)=(a-1)p^m$ and $h_{2,1}(\mu)=p^m$. Hence $|\mathcal{H}^{p^{m}}(\mu)|=a$ by Lemma \ref{lem: Olseasy}, and $C_{p^{m}}(\mu)=(1)$. Therefore $\chi^\mu\in\mathrm{Irr}_{\{2,p\}'}({\sf S}_n)$ by Lemma \ref{lem: last layer}.

On the other hand, if $a=1$ then it is not difficult to see that the only characters in $\mathrm{Irr}_{p'}({\sf S}_n)$ labelled by hook partitions are the trivial and the sign character. More precisely, we have that $\mathrm{Irr}_{p'}({\sf S}_n)\smallsetminus\mathrm{Lin}({\sf S}_n)=\{(n-x,2,1^{x-2})\ |\ 2\leq x\leq n-2\}.$
Hence $\mathrm{Irr}_{\{2,p\}'}({\sf S}_n)=\mathrm{Lin}({\sf S}_n)$. 

\smallskip

\textbf{-} If $n=2^k+1=ap^m$ and $a>1$ then we define $\mu\vdash n$ as follows. 
$$\mu=(1+(a-1)p^m, 2, 1^{p^m-2}).$$ It is routine to check that $h_{11}(\mu)=2^k$, $h_{1,2}(\mu)=(a-1)p^m$ and that $h_{2,1}(\mu)=p^m$. We conclude that $\chi^\mu\in\mathrm{Irr}_{\{2,p\}'}({\sf S}_n)$, by Lemma \ref{lem: last layer}.

If $a=1$ then $\chi^\mu\in\mathrm{Irr}_{p'}({\sf S}_n)$ if, and only if, $\mu\in\mathcal{L}(n)$, by Lemma \ref{hooks}. 
It is now easy to check
that the only characters in $\mathrm{Irr}_{2'}({\sf S}_n)$ labelled by hook partitions are the trivial and the sign character. 
Hence $\mathrm{Irr}_{\{2,p\}'}({\sf S}_n)=\mathrm{Lin}({\sf S}_n)$. 
\end{proof}



Theorem \ref{teo: MainEu} can be reformulated as follows. 

\begin{cor}\label{complete_symmetric}
The graph $\Gamma'({\sf S}_n)$ is not complete if, and only if, 
$n=2^k=p^m+1$ or $n=2^{k}+1=p^m$. 
In these cases the subgraphs of $\Gamma'({\sf S}_n)$ defined by $\pi(|{\sf S}_n|)\smallsetminus\{2\}$ and by $\pi(|{\sf S}_n|)\smallsetminus\{p\}$ are complete, and there is no edge between $2$ and $p$. 
\end{cor}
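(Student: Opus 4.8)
The plan is to read off the statement directly from Theorem \ref{teo: MainEu} by unpacking what the condition $\mathrm{Irr}_{\pi'}({\sf S}_n)=\mathrm{Lin}({\sf S}_n)$ means in graph-theoretic terms. Recall that, by definition, two primes $p,q\in\pi(|{\sf S}_n|)$ are \emph{non}-adjacent in $\Gamma'({\sf S}_n)$ precisely when every non-linear $\chi\in\mathrm{Irr}({\sf S}_n)$ has degree divisible by $p$ or by $q$; equivalently, when $\mathrm{Irr}_{\{p,q\}'}({\sf S}_n)\subseteq\mathrm{Lin}({\sf S}_n)$. Since ${\rm Lin}({\sf S}_n)$ always lies in $\mathrm{Irr}_{\pi'}({\sf S}_n)$, this inclusion is in fact the equality $\mathrm{Irr}_{\{p,q\}'}({\sf S}_n)=\mathrm{Lin}({\sf S}_n)$ appearing in Theorem \ref{teo: MainEu}. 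Thus a pair $\{p,q\}$ fails to be an edge if and only if Theorem \ref{teo: MainEu} says that $\mathrm{Irr}_{\{p,q\}'}({\sf S}_n)=\mathrm{Lin}({\sf S}_n)$.

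First I would handle the ``non-complete'' characterization. The graph $\Gamma'({\sf S}_n)$ fails to be complete exactly when some pair of primes dividing $|{\sf S}_n|=n!$ is non-adjacent, i.e. exactly when there exist distinct primes $p,q\le n$ with $\mathrm{Irr}_{\{p,q\}'}({\sf S}_n)=\mathrm{Lin}({\sf S}_n)$. By Theorem \ref{teo: MainEu} this happens if and only if one of the two primes equals $2$ and $n$ is of the form $2^k=p^m+1$ or $2^k+1=p^m$ for the odd prime $p$. This is precisely the stated arithmetic condition, so the first equivalence follows immediately.

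It remains to describe the shape of $\Gamma'({\sf S}_n)$ in these exceptional cases, namely to show that the \emph{only} missing edge is between $2$ and $p$. Suppose we are in one of the two arithmetic cases, so $q=2$ and the odd prime $p$ are fixed. For any pair of primes $\{r,s\}\subseteq\pi(n!)$ other than $\{2,p\}$, I would apply Theorem \ref{teo: MainEu} again: since $\{r,s\}\neq\{2,p\}$, the pair $\{r,s\}$ cannot satisfy the (necessary) conditions of the theorem — either $2\notin\{r,s\}$, or $2\in\{r,s\}$ but the odd member is some prime $p'\neq p$, and $n$ cannot simultaneously be of the form $2^k=p^m+1$ (or $2^k+1=p^m$) and $2^k={p'}^{m'}+1$ (or $2^k+1={p'}^{m'}$) with $p'\neq p$, because the $p$-part and $q$-part decompositions of a fixed $n$ are unique. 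Hence for every such pair the equality $\mathrm{Irr}_{\{r,s\}'}({\sf S}_n)=\mathrm{Lin}({\sf S}_n)$ fails, which means $r$ and $s$ are adjacent. Consequently every edge except $\{2,p\}$ is present; restricting to $\pi(|{\sf S}_n|)\smallsetminus\{2\}$ or to $\pi(|{\sf S}_n|)\smallsetminus\{p\}$ removes the unique missing edge and leaves a complete subgraph, and there is no edge between $2$ and $p$, as claimed.

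The main obstacle I anticipate is the verification that $n$ cannot belong to the exceptional family for two different odd primes simultaneously; this is the only genuinely arithmetic point, and it is settled by uniqueness of the representation $n=2^k$, $n=2^k\pm 1$ together with unique factorization, since $p^m+1$, $p^m-1$, $p^m$ determine $p$ once $n$ is fixed. Everything else is a direct translation of Theorem \ref{teo: MainEu} into the language of the graph $\Gamma'({\sf S}_n)$.
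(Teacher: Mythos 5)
Your proposal is correct and follows exactly the route the paper intends: the paper offers no separate argument, presenting the corollary as a direct reformulation of Theorem \ref{teo: MainEu}, and your translation of non-adjacency into the condition $\mathrm{Irr}_{\{p,q\}'}({\sf S}_n)=\mathrm{Lin}({\sf S}_n)$ together with the unique-factorization observation that the exceptional pair $\{2,p\}$ is unique for a fixed $n$ is precisely the unpacking that is being left to the reader.
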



\begin{cor}\label{cor: EuMain}
Let $p\neq q$ be two primes and define $\pi:=\{p,q\}$. Then
$|\mathrm{Irr}_{\pi'}({\sf A}_n)|>1$, for all $n\geq 5$.
\end{cor}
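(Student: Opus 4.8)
The goal is to show $|\mathrm{Irr}_{\pi'}({\sf A}_n)| > 1$ for every pair of primes $\pi = \{p,q\}$ and every $n \geq 5$. My plan is to leverage Theorem~\ref{teo: MainEu}, which already describes completely when $\mathrm{Irr}_{\pi'}({\sf S}_n) = \mathrm{Lin}({\sf S}_n)$, together with the restriction behavior of $\chi^\lambda$ from ${\sf S}_n$ to ${\sf A}_n$ recalled in the background subsection (if $\lambda \neq \lambda'$ then $(\chi^\lambda)_{{\sf A}_n}$ is irreducible of the same degree; if $\lambda = \lambda'$ it splits into two constituents of equal degree $\chi^\lambda(1)/2$).

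\begin{proof}[Proof sketch for Corollary~\ref{cor: EuMain}]
First I would split into the two cases coming from Theorem~\ref{teo: MainEu}. \textbf{Case 1: $\Gamma'({\sf S}_n)$ is complete}, equivalently $n$ is \emph{not} of the form $2^k = p^m+1$ or $2^k+1 = p^m$ with $q=2$. Then there is a non-linear $\chi^\lambda \in \mathrm{Irr}_{\pi'}({\sf S}_n)$ with $\lambda \neq (n),(1^n)$. If $\lambda \neq \lambda'$, then $(\chi^\lambda)_{{\sf A}_n}$ is irreducible of degree $\chi^\lambda(1)$, which is a $\pi'$-number, and it is non-principal since $\chi^\lambda$ is non-linear; this gives the desired element of $\mathrm{Irr}_{\pi'}({\sf A}_n)$. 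If instead $\lambda = \lambda'$, the restriction splits as $\phi + \phi^g$ with $\phi(1) = \chi^\lambda(1)/2$. Here one must check $\phi(1)$ remains a $\pi'$-number: since $\chi^\lambda(1)$ is odd (because $2 \nmid \chi^\lambda(1)$ whenever $2 \in \pi$) one needs $2 \notin \pi$ to avoid losing a factor of $2$, but more carefully, self-conjugate $\lambda$ forces $\chi^\lambda(1)$ to be even precisely when $2 \mid \chi^\lambda(1)$, so this subcase only arises when $2 \notin \pi$ and then $\phi(1) = \chi^\lambda(1)/2$ is still coprime to both $p$ and $q$ provided $2 \notin \{p,q\}$. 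One should argue that, since $\lambda$ is non-rectangular here (it is a genuine non-hook, non-trivial partition), $\phi$ is non-principal. This disposes of Case~1.

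\textbf{Case 2: $q=2$ and $n = 2^k = p^m+1$ or $n = 2^k+1 = p^m$.} Now $\mathrm{Irr}_{\pi'}({\sf S}_n) = \mathrm{Lin}({\sf S}_n) = \{1_{{\sf S}_n}, \mathrm{sgn}\}$, so there is no non-linear $\pi'$-character upstairs to restrict; I must produce the $\pi'$-character of ${\sf A}_n$ by a direct construction. The idea is to find a \emph{self-conjugate} partition $\lambda = \lambda'$ of $n$ for which $(\chi^\lambda)_{{\sf A}_n}$ splits into two characters $\phi, \phi^g$ with $\phi(1) = \chi^\lambda(1)/2$ a $\pi'$-number. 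The key point is that although $\chi^\lambda(1)$ itself is divisible by $2$ (that is why $\lambda$ did not survive in ${\sf S}_n$, since $q=2 \in \pi$), after halving we may kill exactly the offending factor of $2$. Concretely, I would look for $\lambda = \lambda'$ with $\nu_2(\chi^\lambda(1)) = 1$ and $\nu_p(\chi^\lambda(1)) = 0$. Here Lemma~\ref{lem: oldlayer} is tailor-made: for $n = 2^k + \varepsilon$ with $\varepsilon \in \{0,1\}$, a partition $\lambda$ with $\mathcal{H}^{2^k}(\lambda) = \emptyset$ and $|\mathcal{H}^{2^{k-1}}(\lambda)| = 2$ has $\nu_2(\chi^\lambda(1)) = 1$. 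I would choose such a $\lambda$ to additionally be self-conjugate (the hook-structure at the $(k-1)$-level gives freedom to symmetrize) and to have $\nu_p(\chi^\lambda(1)) = 0$, the latter checked via Macdonald's theorem (Theorem~\ref{theo: mac}) through its $p$-core tower, exactly as in the constructions in the proof of Theorem~\ref{teo: MainEu}. Then $\phi(1) = \chi^\lambda(1)/2$ is odd and prime to $p$, so $\phi \in \mathrm{Irr}_{\pi'}({\sf A}_n)$ is the required non-principal character.

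The main obstacle I expect is Case~2: one must exhibit an explicit self-conjugate $\lambda \vdash n$ simultaneously satisfying the $2$-adic condition of Lemma~\ref{lem: oldlayer} and the $p$-coprimality, and then verify that the two constituents $\phi, \phi^g$ are genuinely non-principal (which they are, since principal and sign characters of ${\sf A}_n$ come from the non-self-conjugate $\lambda = (n),(1^n)$, whereas our $\lambda$ is self-conjugate and hence distinct). A delicate bookkeeping point is that for very small $n$ (e.g.\ $n = 5, 7, 8, 9$) the constructed $\lambda$ might degenerate, so I would handle these boundary values by an explicit check against the known character tables of the small alternating groups. Once both the complete-graph case and the two exceptional arithmetic cases are covered, $|\mathrm{Irr}_{\pi'}({\sf A}_n)| > 1$ follows for all $n \geq 5$.
\end{proof}
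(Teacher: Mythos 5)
Your proposal is correct and follows essentially the same route as the paper: in the generic case you restrict the non-linear $\pi'$-character of ${\sf S}_n$ supplied by Theorem~\ref{teo: MainEu}, and in the exceptional case $q=2$ with $n=2^k=p^m+1$ or $n=2^k+1=p^m$ you pass to a self-conjugate partition whose character has $2$-part exactly $2$ (via Lemma~\ref{lem: oldlayer}) and $p'$-degree (via Lemma~\ref{lem: last layer}), so that each constituent of its restriction to ${\sf A}_n$ has odd $\pi'$-degree. The one detail you leave open, the explicit choice of the self-conjugate partition, is supplied in the paper as $\mu=(2^{k-1}+1,1^{2^{k-1}})$ or $\mu=(2^{k-1},2,1^{2^{k-1}-2})$ respectively, and the small-$n$ degeneracies you worry about do not occur since $n>4$.
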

\begin{proof}
By Theorem \ref{teo: MainEu} there exists $\lambda\in\mathcal{P}(n)\smallsetminus\{(n), (1^n)\}$ such that $\chi^\lambda\in\mathrm{Irr}_{\pi'}({\sf S}_n)$, unless $q=2$ and $n=2^k=p^m+1$ or $n=2^k+1=p^m$.
In all these cases, let $\phi\in\mathrm{Irr}({\sf A}_n)$ be an irreducible constituent of $(\chi^\lambda)_{{\sf A}_n}$. Then $\phi(1)\in\{\chi^\lambda(1), \chi^\lambda(1)/2\}$. In particular $\phi\in\mathrm{Irr}_{\pi'}({\sf A}_n)$. 
Suppose now that $q=2$ and $n=2^k=p^m+1$ or $n=2^k+1=p^m$. 
We let $\mu$ be the partition of $n$ defined as follows: 
$$\mu=\begin{cases} (2^{k-1}+1, 1^{2^{k-1}}) & \mathrm{if}\ n=p^{m}=2^{k}+1,\\
\\
(2^{k-1}, 2, 1^{2^{k-1}-2}) & \mathrm{if}\ n=p^{m}+1=2^{k}.\end{cases}$$
(Notice that $\mu$ is well defined because $n>4$.)
In both cases we see that $\mathcal{H}^{2^{k}}(\mu)=\emptyset$ and that $|\mathcal{H}^{2^{k-1}}(\mu)|=2$. Therefore by Lemma \ref{lem: oldlayer} we deduce that $\nu_2(\chi^\mu(1))=1$. Since $\mu=\mu'$ we conclude that $\phi(1)=\chi^{\mu}(1)/2$ is odd, for any irreducible constituent $\phi$ of $(\chi^\mu)_{{\sf A}_n}$. Moreover, in both cases $h_{1,1}(\mu)=p^{m}$ and 
$$C_{p^{m}}(\mu)=\begin{cases} \emptyset & \mathrm{if}\ n=p^{m}=2^{k}+1,\\

(1) & \mathrm{if}\ n=p^{m}+1=2^{k}.\end{cases}$$
Hence Lemma \ref{lem: last layer} guarantees that $\chi^\mu\in\mathrm{Irr}_{p'}({\sf S}_n)$. We conclude that $\phi\in\mathrm{Irr}_{\pi'}({\sf A}_n)$.
\end{proof}

We conclude this section by discussing the extendibility to ${\sf S}_n$ of characters in $\mathrm{Irr}_{\pi'}({\sf A}_n)$.
The ideas used in the proof of Theorem \ref{teo: MainEu} can be adapted to classify those alternating groups whose only extendible $\pi'$-character is the trivial character. As we will remark in Section 4, the following result will be useful to study the normal structure of finite groups $G$ such that $\mathrm{Irr}_{\pi'}(G)=\mathrm{Lin}(G)$.

\begin{prop}\label{prop:Eunew}
Let $\pi=\{ p,q\}$ where $p$ and $q$ are distinct primes and let $n\geq 5$ be a natural number. Then ${\sf A}_n$ admits a non-trivial irreducible character of $\pi'$-degree that extends to ${\sf S}_n$, unless 
$$n=\begin{cases} 2q^k=p^m+1 & \mathrm{or},\\
2q^{k}+1=p^m.\end{cases}
$$
\end{prop}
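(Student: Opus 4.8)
The plan is to mimic the construction in Theorem~\ref{teo: MainEu}, but now requiring a \emph{self-conjugate-free} partition $\lambda \neq \lambda'$, so that $(\chi^\lambda)_{{\sf A}_n}$ is irreducible (by the criterion recalled before Theorem~\ref{theo: mac}) and hence gives a genuine irreducible character of ${\sf A}_n$ of $\pi'$-degree that extends to ${\sf S}_n$. First I would set up the $p$-adic and $q$-adic expansions of $n$ as in the theorem, writing $n = \sum a_i p^{m_i} = \sum b_i q^{k_i}$, and assume WLOG $b_1 q^{k_1} < a_1 p^{m_1}$. The partition
$$\lambda=(n-b_1q^{k_1}, n-a_1p^{m_1}+1, 1^{b_1q^{k_1}-(n-a_1p^{m_1}+1)})$$
is a hook-like partition of width $2$ (two rows of length $>1$, then a leg of $1$'s), which is almost never self-conjugate; its conjugate has only two columns. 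So in the ``generic'' case the same $\lambda$ works and restricts irreducibly, giving the desired character. The obstruction is exactly the degenerate case $\lambda=(1^n)$, i.e.\ $n=ap^m=bq^k+1$, together with the secondary degenerate cases where the replacement partition happens to be self-conjugate.

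Next I would treat the degenerate case $2\notin\{p,q\}$ and the case $2\in\{p,q\}$ separately, as in the theorem, but track self-conjugacy carefully. For $2\notin\{p,q\}$ the replacement partitions $\mu=(1+(b-1)q^k,1^{q^k})$ (when $b\geq 2$) and $\mu=(cp^m,2,1^{cp^m-2})$ (when $b=1$) are again short-width partitions whose conjugates look different, so $\mu\neq\mu'$ and $(\chi^\mu)_{{\sf A}_n}$ is irreducible; these cases never produce exceptions. The real difficulty concentrates on $2\in\{p,q\}$, say $q=2$, where $n$ forces hook partitions (Lemma~\ref{hooks}) and the only available $\pi'$-characters tend to be the trivial, sign, and near-hook shapes $(n-x,2,1^{x-2})$, which for the critical values of $n$ become self-conjugate precisely at the boundary. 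Here I expect to need the finer count: I would rewrite the constraint so that $q=2$ is \emph{not} the relevant prime forcing self-conjugacy, and instead demand a two-part-or-three-part hook of \emph{even} weight at the $2$-adic top layer. This is what turns the bound $n=2^k=p^m+1$ of Theorem~\ref{teo: MainEu} into $n=2q^k=p^m+1$ in the proposition: we can afford a factor of $2$ in the $q$-power because restriction to ${\sf A}_n$ and the extendibility condition together only exclude the genuinely self-conjugate shapes, and writing $q^{k}$ with one extra $2$ provides exactly the room to choose $\mu\neq\mu'$.

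Concretely, in the exceptional regime I would attempt the partition
$$\mu=(1+(a-1)p^m,\,2,\,1^{p^m-2})$$
or its analogue with an extra arm, verifying via Lemma~\ref{lem: Olseasy} and Lemma~\ref{lem: last layer} that $|\mathcal{H}^{p^m}(\mu)|=a$ and $C_{p^m}(\mu)$ is a $p'$-small core, hence $\chi^\mu\in\mathrm{Irr}_{\pi'}({\sf S}_n)$; the point is that such a $\mu$ is self-conjugate exactly when $n$ has the shape $2q^k=p^m+1$ or $2q^k+1=p^m$, and is \emph{not} self-conjugate otherwise, so $(\chi^\mu)_{{\sf A}_n}=\chi^\mu{\downarrow}$ remains irreducible and extends. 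Thus the set of truly exceptional $n$ shrinks from those forbidding any $\pi'$-character to those forbidding any \emph{non-self-conjugate} $\pi'$-character. The main obstacle, as I see it, is the bookkeeping of self-conjugacy across all the branch cases: for each candidate $\mu$ I must confirm $\mu\neq\mu'$ (so the character descends and extends) while still meeting the hook-length divisibility constraints on both $p$ and $q=2$, and then match the residual self-conjugate cases precisely to the two stated arithmetic conditions. I would close by checking that in the two listed exceptional families every $\pi'$-character of ${\sf S}_n$ restricting irreducibly to ${\sf A}_n$ is forced to be linear, establishing the ``unless'' direction as sharp.
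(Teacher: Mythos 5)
Your overall strategy---rerun the construction of Theorem \ref{teo: MainEu} while tracking self-conjugacy of the labelling partition and invoke \cite[Thm.~2.5.7]{JK} to descend to ${\sf A}_n$---is exactly the paper's approach, and your handling of the generic partition $\lambda$ is essentially right. But your case analysis breaks down precisely where the new exceptions arise. You claim that for $2\notin\{p,q\}$ the replacement partitions ``never produce exceptions'' because they are not self-conjugate; both halves of this are false. The partition $\mu=(1+(b-1)q^{k},1^{q^{k}})$ has conjugate $(1+q^{k},1^{(b-1)q^{k}})$, so it \emph{is} self-conjugate exactly when $b=2$, and together with $a=1$ this is the case $n=p^m=2q^{k}+1$ --- one of the two stated exceptional families (and there, by Lemma \ref{hooks}, $(1+q^{k},1^{q^{k}})$ is the \emph{only} non-linear $\pi'$-partition, so no extendible character exists). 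Worse, the second replacement $\mu=(cp^{m},2,1^{cp^{m}-2})$ is \emph{always} self-conjugate (its conjugate is itself), so in the $b=1$ branch your construction produces nothing even for non-exceptional $n$; one must switch to a genuinely different partition, as the paper does with $\nu=((a-1)p^{m},2,1^{p^{m}-2})$ for $b\le 2$, verifying $\nu\neq\nu'$ via $a>2$ or via $h_{1,1}(\nu)=2q^{k}$ being even.

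As a consequence you have the logic of the statement inverted. Demanding a non-self-conjugate $\pi'$-partition is a \emph{stronger} requirement than demanding a non-linear one, so the exceptional set grows rather than ``shrinks'': for $q=2$ the families $2q^{k}=p^{m}+1$ and $2q^{k}+1=p^{m}$ are merely a reindexing of the Theorem \ref{teo: MainEu} exceptions (where even ${\sf S}_n$ has no non-linear $\pi'$-character), and the genuinely new exceptions occur for \emph{odd} $q$, i.e.\ in the branch you dismissed; the case $2\in\{p,q\}$ is comparatively routine once $a\ge 2$. Your concrete candidate $\mu=(1+(a-1)p^{m},2,1^{p^{m}-2})$ is also miscalibrated: the self-conjugate obstructions are the $q$-shapes $(1+q^{k},1^{q^{k}})$ and $(q^{k},2,1^{q^{k}-2})$, not partitions built from $p^{m}$, and that $\mu$ is in fact never self-conjugate for $a\ge 2$. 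Finally, the ``unless'' direction --- that in the two listed families \emph{every} non-linear $\pi'$-character of ${\sf S}_n$ is labelled by a self-conjugate partition --- is only promised at the end of your write-up; the paper settles it with Lemma \ref{hooks} and Lemma \ref{lem: last layer}, using that $n$ is then $p^m$ or $p^m+1$.
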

\begin{proof}
If $q=2$ and $n=2^k=p^m+1$  or $n=2^k+1=p^m$ for some $k,m\in\mathbb{N}$, then $\mathrm{Lin}({\sf S}_n)=\mathrm{Irr}_{\pi'}({\sf S}_n)$ by Theorem \ref{teo: MainEu}. It follows that ${\sf A}_n$ admits no non-trivial irreducible character of $\pi'$-degree that extends to ${\sf S}_n$. If $q$ is odd and $n=2q^{k}+1=p^m$ then using Lemma \ref{hooks} we see that the only non-linear irreducible $\pi'$-degree character of ${\sf S}_n$ is labelled by the partition $\lambda=(1+q\6k, 1\6{q\6k})$. Similarly, if $n=2q^{k}=p^m+1$ then the only non-linear irreducible $\pi'$-degree character of ${\sf S}_n$ is labelled by the partition $\lambda=(q\6k, 2, 1\6{q\6k-2})$. In both cases $\lambda=\lambda'$ and therefore we deduce that ${\sf A}_n$ admits no non-trivial irreducible character of $\pi'$-degree that extends to ${\sf S}_n$, by \cite[Thm. 2.5.7]{JK}.

We now recycle the notation used in the proof of Theorem \ref{teo: MainEu} and let $$\sum_{i=1}^ta_ip^{m_i}=n=\sum_{i=1}^rb_iq^{k_i},$$
be the $p$-adic and respectively the $q$-adic expansions of $n$. Again we assume that $b_1q^{k_1}< a_1p^{m_1}$ and we consider $\lambda=(n-b_1q^{k_1}, n-a_1p^{m_1}+1, 1^{b_1q^{k_1}-(n-a_1p^{n_1}+1)}).$
As shown in the proof of Theorem \ref{teo: MainEu} we have that $\chi\6\lambda\in\mathrm{Irr}_{\pi'}({\sf S}_n)$. Moreover, if $n-a_1p\6{m_1}\geq 2$ then $\lambda_2\geq 3$ and hence $\lambda\neq\lambda'$. It follows that $(\chi\6\lambda)_{{\sf A}_n}$ is a non-trivial irreducible character of $\pi'$-degree that extends to ${\sf S}_n$.
We are left to consider the cases where $n-a_1p\6{m_1}\leq 1$. If $n-a_1p\6{m_1}=1$ then $\lambda\neq\lambda'$ because $(\lambda')_1=b_1q\6{k_1}>n-b_1q\6{k_1}=\lambda_1$. Moreover $\lambda\notin\{(n), (1\6n)\}$ because $\lambda_2=2$. It follows that also in this case $(\chi\6\lambda)_{{\sf A}_n}$ is a non-trivial irreducible character of $\pi'$-degree that extends to ${\sf S}_n$. 
If $n-a_1p\6{m_1}=0$ then again $\lambda\neq\lambda'$ because $(\lambda')_1=b_1q\6{k_1}+1>n-b_1q\6{k_1}=\lambda_1$. Hence $(\chi\6\lambda)_{{\sf A}_n}$ is a non-trivial irreducible character of $\pi'$-degree that extends to ${\sf S}_n$, unless $\lambda\in\{(n), (1\6n)\}$. This happens if, and only if, 
$n=ap\6{m}=bq\6{k}+1$. (To ease the notation we renamed $a_1, m_1, b_1$ and $k_1$ by $a, m, b$ and $k$ respectively).
Suppose first that $2\notin\{p,q\}$. From the first part of this proof, we can assume that $(a,b)\notin\{(1,2), (2,1)\}$.
If $b\geq 3$ and $\mu=(1+(b-1)q^{k}, 1^{q^{k}}),$ then arguing in a very similar fashion as in the proof of Theorem \ref{teo: MainEu} we observe that
$(\chi\6\mu)_{{\sf A}_n}$ is a non-trivial irreducible character of $\pi'$-degree that extends to ${\sf S}_n$. 
If $b\leq 2$, we let $\nu=((a-1)p\6m, 2, 1\6{p\6m-2})$. This is a well defined partition as $a>1$. This can be deduced by noticing that when $b=1$ then $a$ must be even, and for $b=2$ then $(a,b)\neq (1,2)$. 
 By Lemma \ref{lem: last layer} we see that $\chi\6\nu\in\mathrm{Irr}_{\pi'}({\sf S}_n)$. Moreover, we notice that if $b=2$ then $h_{1,1}(\nu)=2q\6k$ is even and hence $\nu\neq \nu'$. On the other hand, if $b=1$ then $a$ is even and strictly greater than $2$ (as $(a,b)\neq (2,1)$). Thus also in this case it follows that $\nu\neq\nu'$. We conclude that 
$(\chi\6\nu)_{{\sf A}_n}$ is a non-trivial irreducible character of $\pi'$-degree that extends to ${\sf S}_n$.
Finally, we study the case where $2\in\{p,q\}$. If $n=2\6k=ap\6m+1$ then we can assume that $a\geq 2$, from the first part of the proof. In this case we take $\mu=(1+(a-1)p^m, 1^{p^m})$ as in the proof of Theorem \ref{teo: MainEu}.
Similarly, if $n=2\6k+1=ap\6m$ then $a$ is odd and again we can use the first part of the proof to assume that $a\geq 3$ and we let $\mu=(1+(a-1)p^m, 2, 1^{p^m-2})$.
In both cases 
we observe that 
$(\chi\6\mu)_{{\sf A}_n}$ is a non-trivial irreducible character of $\pi'$-degree that extends to ${\sf S}_n$.
\end{proof}

\section{Groups of Lie Type and Sporadic Groups}

The aim of this section is to complete the proof of Theorem B. 
We begin with the case of the sporadic groups and certain groups of Lie type that may be treated computationally.

\begin{lem}\label{lem:lowrank}
The simple groups 
$G_2(3)$, $G_2(4)$,  $\tw{2}G_2(3)'$, $\tw{2}F_4(2)',$ and $\tw{2}E_6(2)$  and the 26 sporadic groups satisfy Theorem B.
\end{lem}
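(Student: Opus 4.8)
The plan is to verify Theorem B --- that $\Gamma'(G)$ is complete, i.e. for every pair of primes $p,q$ dividing $|G|$ there is a non-linear $\chi \in \irr G$ with $pq \nmid \chi(1)$ --- directly by computation for these finitely many groups, using the character table library in \textsf{GAP} (the \textsf{CTblLib} package). For each of the listed groups $G$ (the five specified groups of Lie type and the $26$ sporadic groups), I would load its character table, read off $\pi(|G|)$ and the multiset of character degrees $\cd{G}$, and then confirm completeness of $\Gamma'(G)$. Concretely, for each unordered pair $\{p,q\} \subseteq \pi(|G|)$ one checks that the set $\{\chi(1) : \chi \in \irr G,\ \chi(1)>1\}$ contains at least one degree coprime to both $p$ and $q$; since simple groups have no non-trivial linear characters, any such degree automatically comes from a non-principal character, so the edge $\{p,q\}$ is present.

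First I would observe that the computation is entirely finite and bounded: each of these groups has a known, explicitly stored character table, so no theoretical input beyond the definition of $\Gamma'(G)$ is needed. The routine is: for fixed $G$, form the list $D$ of distinct non-trivial character degrees; for each pair of primes $p,q \mid |G|$, search $D$ for a degree $d$ with $\gcd(d, pq)=1$. If every pair admits such a $d$, the graph is complete. In practice one does not even need to iterate over all pairs separately --- it suffices to exhibit, for each prime $p \in \pi(|G|)$, enough $p'$-degrees whose remaining prime divisors collectively miss every other prime, but the brute-force pairwise check is cleanest to state and trivially valid for groups of this size.

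The only mild subtlety, and the step I would flag as the main (though still routine) obstacle, is the pair of groups $\tw{2}G_2(3)' \cong \PSL_2(8)$ and $\tw{2}F_4(2)'$ (the Tits group), which are the derived subgroups appearing in the statement rather than the full groups of Lie type; I would make sure to select the character table of the simple group itself in each case, since Theorem B concerns non-abelian simple groups. For $\tw{2}G_2(3)'=\PSL_2(8)$ and $\tw{2}E_6(2)$, as well as $G_2(3),G_2(4)$, the tables are standard. Once the correct tables are fixed, the verification is mechanical and leaves no cases requiring separate argument.

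Thus the proof reduces to reporting the outcome of this finite check: in every case and for every pair $\{p,q\}$, a suitable non-linear $\pi'$-degree character exists, so $\Gamma'(G)$ is complete and Theorem B holds for each of these groups. I would record that the computations were carried out in \textsf{GAP} using \textsf{CTblLib}, which supplies the character tables of all $26$ sporadic groups and of the small-rank and exceptional groups of Lie type listed here.
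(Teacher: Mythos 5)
Your proposal is correct and is exactly the paper's approach: the paper's entire proof of this lemma is "This can be seen using GAP and the Character Table Library," and your brute-force pairwise check of non-trivial character degrees against each pair of primes dividing $|G|$ is the natural way to carry that out. Your remark that for simple groups any non-trivial degree automatically comes from a non-linear character, and your care in selecting the tables of the derived subgroups $\tw{2}G_2(3)'$ and $\tw{2}F_4(2)'$, are both sound.
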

\begin{proof}
This can be seen using GAP and the Character Table Library.
\end{proof}

Let $G=\bG^F$ be the group of fixed points of a connected reductive algebraic group $\bG$ defined over $\overline{\mathbb{F}}_r$ under a Steinberg map $F$. Here $r$ is a prime and $\overline{\mathbb{F}}_r$ is an algebraic closure of the finite field of cardinality $r$. We will call a group $G$ of this form a finite reductive group.  Further, let $G^\ast=(\bG^\ast)^{F^\ast}$, where $(\bG^\ast, F^\ast)$ is dual to $(\bG, F)$.   

The set of irreducible characters $\irr G $ can be written as a disjoint union $\bigsqcup \mathcal{E}(G, s)$ of rational Lusztig series corresponding to $G^\ast$-conjugacy classes of semisimple elements $s\in G^\ast$.  The characters in the series $\mathcal{E}(G, 1)$ are called unipotent characters, and there is a bijection $\mathcal{E}(G,s)\rightarrow\mathcal{E}(C_{G^\ast}(s), 1)$.  Hence, characters of $\irr G$ may be indexed by pairs $(s, \psi)$, where $s\in G^\ast$ is a semisimple element, up to $G^\ast$-conjugacy, and $\psi\in\irr {C_{G^\ast}(s)}$ is a unipotent character.  

Further, if $\chi\in\irr G$ is indexed by the pair $(s,\psi)$, then the degrees of $\chi$ and $\psi$ are related in the following way: 
\begin{equation}\label{DL}
\chi(1)=|G^\ast:C_{G^\ast}(s)|_{r'}\psi(1)
\end{equation}
(see \cite[Remark 13.24]{dignemichel}).  Here for a natural number $n$ and a prime $p$, we denote by $n_{p'}$ the largest integer $m$ dividing $n$ such that $\gcd(m,p)=1$.  Similarly, we will denote the number $p^{\nu_p(n)}$ by $n_p$.

From \eqref{DL}, we immediately see the following:

\begin{prop}\label{prop:descdl}
Let $G=\bG^F$ be a finite reductive group defined over a field of characteristic $r$.  Let $p\neq q$ be two primes and define $\pi:=\{p,q\}$.

\begin{itemize}
\item If $r\not\in\pi$, then the set $\irrpi G$ is parametrized by pairs $(s, \psi)$ as above, where $s$ centralizes both a Sylow $p$-subgroup and Sylow $q$-subgroup of $G^\ast$ and $\psi\in\irrpi {C_{G^\ast}(s)}$.
\item If $r=p$, then $\irrpi G$ is parametrized by pairs $(s,\psi)$ as above where $s$ centralizes a Sylow $q$-subgroup of $G^\ast$ and $\psi\in\irrpi {C_{G^\ast}(s)}$.  
\end{itemize}
\end{prop}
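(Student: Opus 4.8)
The plan is to derive both statements directly from the degree formula \eqref{DL}. Fix $\chi\in\irr{G}$ indexed by a pair $(s,\psi)$, with $s$ semisimple up to $G^\ast$-conjugacy and $\psi$ a unipotent character of $C_{G^\ast}(s)$, so that $\chi(1)=|G^\ast:C_{G^\ast}(s)|_{r'}\,\psi(1)$. The first ingredient I would record is an elementary dictionary: for any prime $\ell$, the element $s$ centralizes a Sylow $\ell$-subgroup of $G^\ast$ if and only if $\ell\nmid|G^\ast:C_{G^\ast}(s)|$. For the forward direction, if $P\in\syl{\ell}{G^\ast}$ satisfies $P\leq C_{G^\ast}(s)$, then $|C_{G^\ast}(s)|_\ell=|G^\ast|_\ell$, whence $\ell\nmid|G^\ast:C_{G^\ast}(s)|$; conversely, if $\ell\nmid|G^\ast:C_{G^\ast}(s)|$, then a Sylow $\ell$-subgroup of $C_{G^\ast}(s)$ is already a Sylow $\ell$-subgroup of $G^\ast$, and it is centralized by $s$ since $s\in\zent{C_{G^\ast}(s)}$.

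Next I would analyze the $\ell$-part of $\chi(1)$ for each prime $\ell\in\pi$. If $\ell\neq r$, then passing to the $r'$-part leaves the $\ell$-part unchanged, so $\nu_\ell(\chi(1))=\nu_\ell(|G^\ast:C_{G^\ast}(s)|)+\nu_\ell(\psi(1))$; hence $\ell\nmid\chi(1)$ precisely when $\ell\nmid|G^\ast:C_{G^\ast}(s)|$ and $\ell\nmid\psi(1)$, which by the dictionary means that $s$ centralizes a Sylow $\ell$-subgroup of $G^\ast$ and that $\ell\nmid\psi(1)$. If instead $\ell=r$, then $|G^\ast:C_{G^\ast}(s)|_{r'}$ is coprime to $r$ by the definition of the $r'$-part, so $r\nmid\chi(1)$ is equivalent to $r\nmid\psi(1)$, with no Sylow condition imposed on $s$.

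Assembling these observations over $\ell\in\{p,q\}$ yields both bullets. When $r\notin\pi$, applying the case $\ell\neq r$ to both $p$ and $q$ shows that $\chi\in\irrpi{G}$ if and only if $s$ centralizes a Sylow $p$-subgroup and a Sylow $q$-subgroup of $G^\ast$ and $p,q\nmid\psi(1)$, that is, $\psi\in\irrpi{C_{G^\ast}(s)}$. When $r=p$, applying the case $\ell=r$ to $p$ and the case $\ell\neq r$ to $q$ shows that $\chi\in\irrpi{G}$ if and only if $s$ centralizes a Sylow $q$-subgroup of $G^\ast$ and $\psi\in\irrpi{C_{G^\ast}(s)}$. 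As this is an immediate unwinding of \eqref{DL}, there is no serious obstacle; the only point deserving care is the clean separation of the $r$-part from the $r'$-part of the degree, which is exactly what distinguishes the two cases.
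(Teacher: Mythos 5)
Your proposal is correct and takes essentially the same approach as the paper, which offers no separate argument but simply asserts the proposition ``immediately'' from the degree formula \eqref{DL}; your write-up is just the careful unwinding of that observation (the Sylow-centralizing dictionary plus the separation of the $r$-part from the $r'$-part of the degree).
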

Moreover, \cite[Theorem 6.8]{mallehz} yields that in the second statement of Proposition \ref{prop:descdl}, we may often further say $\psi(1)=1$.

\begin{lem}\label{lem:reductive}
Let $G=\bG^F$ be a finite reductive group defined over a field of characteristic $r$.  Let $p\neq q$ be two primes and define $\pi:=\{p,q\}$.  Then $|\irrpi G|>1$.
\end{lem}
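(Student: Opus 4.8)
The plan is to prove that every finite reductive group $G=\bG^F$ possesses at least one non-principal irreducible character of $\pi'$-degree, for any pair $\pi=\{p,q\}$. The natural first move is to reduce to the case where both $p$ and $q$ divide $|G|$, since otherwise $\irr G=\irrp G$ or $\irrq G$ and the trivial $|\pi|=1$ reduction gives many $\pi'$-characters immediately (indeed $\irrp G=\irr G$ cannot equal $\{1_G\}$ unless $G=1$). Next I would separate according to whether the defining characteristic $r$ lies in $\pi$, matching the two bullets of Proposition \ref{prop:descdl}.

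In the \emph{cross-characteristic} case ($r\notin\pi$), the goal becomes: exhibit a nontrivial semisimple element $s\in G^\ast$ centralizing Sylow $p$- and $q$-subgroups of $G^\ast$ together with a $\psi\in\irrpi{\cent{G^\ast}{s}}$, or exhibit a nontrivial \emph{unipotent} character of $\pi'$-degree (the case $s=1$). The cleanest approach is to use the Steinberg character $\mathrm{St}_G$, whose degree is $|G|_r$, the full $r$-part of $|G|$; since $r\notin\pi$ this degree is automatically a $\pi'$-number and $\mathrm{St}_G$ is nonprincipal whenever $G$ is not abelian. This single observation should dispose of essentially all cross-characteristic cases at once.

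In the \emph{defining-characteristic} case $r=p$, the Steinberg character has degree $|G|_p$, which is \emph{not} a $p'$-number, so that trick fails and this is where the real obstacle lies. Here I would lean on Proposition \ref{prop:descdl} together with the cited result \cite[Theorem 6.8]{mallehz}: one seeks a semisimple $s$ whose centralizer order is coprime to $q$ (so that $|G^\ast:\cent{G^\ast}{s}|_{r'}$ absorbs the $q$-part correctly) and such that a linear unipotent character $\psi$ with $\psi(1)=1$ is available, forcing $\chi(1)=|G^\ast:\cent{G^\ast}{s}|_{p'}$ to be a $\pi'$-number. Concretely, one wants a semisimple $s\in G^\ast$ lying in a maximally split torus-type subgroup so that $q\nmid|\cent{G^\ast}{s}|$ while $s\neq 1$; the existence of such an element is a statement about the structure of semisimple classes in $G^\ast$ and is the delicate point.

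The hardest step is guaranteeing, in defining characteristic, the existence of a suitable nontrivial semisimple $s$ with the required coprimality of its centralizer order to $q$ and an available $\psi(1)=1$. I expect this to require a uniform argument across the Lie types — most likely choosing $s$ inside a torus whose order is divisible by a primitive prime divisor, or arguing via regular semisimple elements — possibly with a short list of small exceptional groups handled separately (which is precisely why Lemma \ref{lem:lowrank} was dispatched computationally beforehand). Once the right $s$ and $\psi$ are produced, equation \eqref{DL} and Proposition \ref{prop:descdl} immediately yield a character of $\pi'$-degree, and one verifies it is nonprincipal by noting $|G^\ast:\cent{G^\ast}{s}|>1$ whenever $s\neq 1$, completing the argument that $|\irrpi G|>1$.
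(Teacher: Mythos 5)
Your cross-characteristic half (the Steinberg character) matches the paper exactly. But in the defining-characteristic case $r=p$ your key condition is inverted, and this is a genuine error rather than a stylistic difference. You ask for a semisimple $s$ whose \emph{centralizer order is coprime to $q$}; by \eqref{DL} that would make $\chi(1)=|G^\ast:C_{G^\ast}(s)|_{r'}\,\psi(1)$ divisible by the full $q$-part of $|G^\ast|$, i.e.\ it produces exactly the characters you must avoid. What is needed is the opposite: $q\nmid |G^\ast:C_{G^\ast}(s)|$, so $C_{G^\ast}(s)$ must \emph{contain} a Sylow $q$-subgroup of $G^\ast$. Your proposed constructions (regular semisimple elements, tori of order divisible by a primitive prime divisor) push toward small centralizers and so head in the wrong direction entirely.

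Once the condition is stated correctly, the "delicate point" you anticipate evaporates: take $Q\in\syl q{G^\ast}$ and any nontrivial $s\in Z(Q)$. Such $s$ exists whenever $q$ divides $|G^\ast|=|G|$ (the other case being trivial), it is semisimple because its order is a power of $q\neq r$, and $Q\leq C_{G^\ast}(s)$ holds automatically. The character indexed by $(s,1_{C_{G^\ast}(s)})$ then has degree $|G^\ast:C_{G^\ast}(s)|_{r'}$, prime to $p=r$ by construction and prime to $q$ since the index is. This is the paper's one-line argument; no type-by-type analysis or appeal to \cite[Theorem 6.8]{mallehz} is needed here, and the computational Lemma \ref{lem:lowrank} is reserved for the later refinement (Theorem \ref{thm:mainLietype}) where one must additionally force $s\in(G^\ast)'$ so that the resulting character is trivial on $Z(G)$. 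One last small point: the character is non-principal because it lies in the series $\mathcal{E}(G,s)$ with $s\neq 1$, not because $|G^\ast:C_{G^\ast}(s)|>1$ --- for central $s$ that index is $1$ and the character is a nontrivial linear character, which still suffices for $|\irrpi G|>1$.
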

\begin{proof}
First, suppose $p\neq r$ and $q\neq r$.  Then the degree of the Steinberg character $\mathrm{St}_G$ is a power of $r$  (see for example \cite[Corollary 9.3]{dignemichel}), and is therefore an element in $\irrpi G$.  Then we may assume that $p$ is the defining characteristic for $G$.  That is, we assume $r=p$.

Now, let $Q$ be a Sylow $q$-subgroup of $G^\ast$ and let $s\in Z(Q)$ be nontrivial.  Then $s$ is semisimple, since $q\neq p$, and certainly $Q\leq C_{G^\ast}(s)$.  Hence taking $\chi$ to be indexed by $(s, 1_{C_{G^\ast}(s)})$, we have $q\nmid \chi(1)$ and $p\nmid \chi(1)$ from \eqref{DL}, so $\chi\in\irrpi G$.
\end{proof}

Before we extend the above result to prove Theorem B in the case of simple groups of Lie type, we note the following straightforward but useful lemma.

\begin{lem}\label{lem:centerkernel}
Let $G$ be a perfect group and let $q$ be prime.  Suppose that $|Z(G)|$ is a power of $q$ and that $\chi\in\irr{G}$ has degree prime to $q$.  Then $Z(G)$ is in the kernel of $\chi$.  
\end{lem}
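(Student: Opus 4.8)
The plan is to analyze the restriction $\chi_{Z(G)}$ and to exploit the fact that a perfect group has trivial determinantal character. Write $Z=Z(G)$ and let $\rho$ be a representation affording $\chi$. Since every element of $Z$ is central, Schur's Lemma gives $\rho(z)=\lambda(z)I_{\chi(1)}$ for a uniquely determined linear character $\lambda\in\Irr(Z)$; equivalently, $\chi_Z=\chi(1)\lambda$. The goal thus reduces to showing $\lambda=1_Z$, which is precisely the assertion $Z\le\ker\chi$.

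The key leverage is the determinantal character $\det\chi$, which is a linear character of $G$. Because $G$ is perfect it admits no nontrivial linear characters, whence $\det\chi=1_G$. Restricting to $Z$ and using $\rho(z)=\lambda(z)I_{\chi(1)}$, one computes $(\det\chi)(z)=\lambda(z)^{\chi(1)}$ for every $z\in Z$. Therefore $\lambda^{\chi(1)}=1_Z$, that is, the order $o(\lambda)$ of $\lambda$ divides $\chi(1)$.

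To finish I would combine two divisibility constraints on $o(\lambda)$. On the one hand, $\lambda\in\Irr(Z)$ with $Z$ abelian, so $o(\lambda)$ divides $|Z|$, which is a power of $q$ by hypothesis; hence $o(\lambda)$ is itself a power of $q$. On the other hand $o(\lambda)\mid\chi(1)$, and $\chi(1)$ is prime to $q$. The only power of $q$ dividing a number coprime to $q$ is $1$, forcing $o(\lambda)=1$, i.e. $\lambda=1_Z$. Consequently $\chi_Z=\chi(1)1_Z$ and $Z\le\ker\chi$, as claimed.

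The argument is entirely elementary; the only points requiring a moment of care are the identity $(\det\chi)_Z=\lambda^{\chi(1)}$ and the observation that perfectness kills $\det\chi$. Once these are in place, the coprimality of $o(\lambda)$ to $\chi(1)$ does the rest, and no serious obstacle arises.
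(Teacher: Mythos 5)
Your proof is correct and follows essentially the same route as the paper's: both use that perfectness forces $\det\chi=1_G$, restrict to $Z(G)$ to get $\lambda^{\chi(1)}=1_Z$ for the central character $\lambda$, and then conclude $o(\lambda)=1$ from the coprimality of $\chi(1)$ with the $q$-power order of $Z(G)$. The paper's proof is just a one-line compression of exactly this argument.
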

\begin{proof}
Write $Z=Z(G)$. The order $o(\lambda)$ of $\lambda \in \irr Z$ lying under $\chi$ must divide $\chi(1)$, since $1_Z=\det{\chi}_Z=\lambda^{\chi(1)}$.\end{proof}
 
We are now ready to complete the proof of Theorem B.

\begin{thm}\label{thm:mainLietype}
Let $S$ be a finite simple group such that $S=G/Z(G)$ for $G=\bG^F$ a finite reductive group of simply connected type defined over $\mathbb{F}_{r^a}$ for some prime  $r$.  Let $p\neq q$ be two primes and define $\pi:=\{p,q\}$.  Then $|\irrpi S|>1$.
\end{thm}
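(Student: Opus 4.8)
The plan is to sharpen Lemma~\ref{lem:reductive}: in each case I will produce a \emph{nonprincipal} $\chi\in\irr{G}$ of $\pi'$-degree that is trivial on $Z(G)=Z(\bG^F)$, so that $\chi$ is inflated from a character in $\irrpi{S}$, where $S=G/Z(G)$. First I would dispose of the degenerate cases: if $p\nmid|S|$ (or symmetrically $q\nmid|S|$), then $\irrpi{S}=\irrq{S}$ (or $\irr{S}$ if neither prime divides $|S|$), and the elementary one-prime argument of the Introduction shows $|\irrpi{S}|>1$ since $S\neq 1$. Hence I may assume that both $p$ and $q$ divide $|S|$, and in particular $q\mid |G^\ast|=|G|$.

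Let $r$ be the defining characteristic. If $r\notin\pi$, take $\chi=\mathrm{St}_G$, the Steinberg character. Being unipotent, it lies in $\mathcal{E}(G,1)$, so $Z(G)$ acts trivially on it and it is inflated from $S$; its degree is a power of $r$ by \cite[Corollary 9.3]{dignemichel}, hence coprime to $p$ and $q$, and it is nonprincipal as $\bG$ is nonabelian. This settles $r\notin\pi$.

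Now assume $r\in\pi$; by the symmetry of $\pi$ we may take $r=p$, so $q\neq r$. As in Lemma~\ref{lem:reductive}, choose $Q\in\syl{q}{G^\ast}$ (nontrivial since $q\mid|G^\ast|$) and a nontrivial \emph{$q$-element} $s\in Z(Q)$, and let $\chi=\chi_{(s,1)}\in\mathcal{E}(G,s)$ be the corresponding semisimple character. By \eqref{DL}, $\chi(1)=|G^\ast:C_{G^\ast}(s)|_{r'}$, which is coprime to $r=p$ and, since $Q\leq C_{G^\ast}(s)$, coprime to $q$; thus $\chi$ has $\pi'$-degree. That $\chi(1)>1$ follows because $C_{\bG^\ast}(s)$ is a proper subgroup of maximal rank, so the index $|G^\ast:C_{G^\ast}(s)|$ carries a cyclotomic factor $\Phi_d(r^a)$ with $d\geq 1$ and is therefore divisible by a prime different from $r$ (the residual possibility $r^a=2$ being handled by Lemma~\ref{lem:lowrank} or direct inspection).

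The crux is to show $Z(G)\leq\ker\chi$. The central subgroup $Z(G)$ acts on all of $\mathcal{E}(G,s)$ through one linear character $\hat s$, and the duality between $\bG$ and $\bG^\ast$ gives $o(\hat s)\mid o(s)$, a power of $q$ (see \cite{dignemichel}). On the other hand, $G$ is perfect, so $\det\chi=1_G$; writing $\chi_{Z(G)}=\chi(1)\hat s$ and taking determinants exactly as in Lemma~\ref{lem:centerkernel} gives $o(\hat s)\mid\chi(1)$, which is coprime to $q$. Hence $o(\hat s)=1$, i.e.\ $\hat s=1_{Z(G)}$, so $\chi$ inflates to a nonprincipal character of $S$ of $\pi'$-degree. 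I expect this last step to be the main obstacle: because the center of a simply connected group need not be a prime-power group (notably in types $A$ and $\tw{2}A$), Lemma~\ref{lem:centerkernel} cannot be invoked on the nose, and one genuinely needs to combine the $q$-power bound $o(\hat s)\mid o(s)$ coming from Lusztig series with the coprimality bound $o(\hat s)\mid\chi(1)$ coming from perfectness.
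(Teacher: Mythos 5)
Your argument is correct, and while the skeleton (Steinberg character when $r\notin\pi$; otherwise a semisimple character attached to a nontrivial $s\in Z(Q)$ for $Q\in\syl q{G^\ast}$, with the whole difficulty concentrated in showing $Z(G)\le\ker\chi$) is exactly the paper's, your resolution of that difficulty is genuinely different and shorter. The paper invokes \cite[Lemma 4.4(ii)]{NavarroTiep13} to reduce to choosing $s\in(G^\ast)'$, dispatches the cases $q\nmid|Z(G)|$ and $|Z(G)|$ a $q$-power (the latter via Lemma \ref{lem:centerkernel}), and is then forced into type $A$, where it carries out an explicit analysis of $Z(\wt Q)$ for $\wt Q\in\syl q{\GL_n^\epsilon(p^a)}$ using \cite{weir} and \cite{carterfong}, either finding $s\in Z(Q)\cap(G^\ast)'$ or producing the determinant-one element $(\mu I_{q^i},I_{q^j})$. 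You bypass all of this by strengthening Lemma \ref{lem:centerkernel}: the character $\hat s$ of $Z(G)$ under $\chi$ has order dividing $o(s)$ (a $q$-power, since under the isomorphism $G^\ast/(G^\ast)'\cong\Irr(Z(G))$ of \cite[Lemma 4.4]{NavarroTiep13} it is the image of $s(G^\ast)'$ --- this is the right citation rather than \cite{dignemichel} alone) and also dividing $\chi(1)$ (a $q'$-number) by the determinant argument, forcing $\hat s=1$. This is uniform across all types and eliminates the type-$A$ computation; what the paper's longer route buys is that the same Sylow analysis is recycled to prove the finer Theorem \ref{thm:complete_typeA}, Corollary \ref{cor:complete_typeA} and Proposition \ref{prop:SLnew}. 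Two small simplifications to your write-up: nonprincipality of $\chi$ on $S$ needs no cyclotomic-polynomial discussion (and no $r^a=2$ caveat), since $\chi\in\mathcal E(G,s)$ with $s\ne 1$ forces $\chi\ne 1_G$ by disjointness of rational Lusztig series; and your degenerate case $q\nmid|S|$ is likewise immediate. As in the paper, the handful of groups with exceptional behaviour listed in Lemma \ref{lem:lowrank} should still be set aside at the outset.
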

\begin{proof}
By Lemma \ref{lem:lowrank}, we may assume $S$ is not one of the groups listed there, and by Section \ref{alternating}, we may assume $S$ is not isomorphic to an alternating group.  We wish to show that the character $\chi$ constructed in the proof of Lemma \ref{lem:reductive} can be chosen to be trivial on $Z(G)$.  This is satisfied for $\mathrm{St}_G$, so we again assume $r=p$. By \cite[Lemma 4.4(ii)]{NavarroTiep13}, it therefore suffices to show that the semisimple element $s$ used in the proof of Lemma \ref{lem:reductive} can be chosen to be contained in $(G^\ast)'$.

Here $G$ is quasisimple and $|Z(G)|=|G^\ast: (G^\ast)'|$, where $(G^\ast)'$ denotes the commutator subgroup of $G^\ast$. Now, if $q\nmid |Z(G)|$, then $Q\leq (G^\ast)'$, where $Q$ is any Sylow $q$-subgroup of $G^\ast$, and the $s$ chosen in Lemma \ref{lem:reductive} is in this case contained in $(G^\ast)'$.  Combining this with Lemma \ref{lem:centerkernel}, we may therefore assume that $q$ divides $|Z(G)|$ but that $|Z(G)|$ is not a power of $q$.  This leaves only the case that $\bG$ is of type $A$.

Hence, for the remainder of the proof, we assume that $G=\SL^\epsilon_n(p^a)$ with $\epsilon\in\{\pm1\}$.  Here $\epsilon=1$ means $G=\SL_n(p^a)$, $\epsilon=-1$ means $G=\SU_n(p^a)$, and $|Z(G)|=\gcd(n,p^a-\epsilon)$.  Recall from above that we may assume $n$ is not a power of $q$.  Writing $\wt{G}=\GL_n^\epsilon(p^a)$, we further have \[G^\ast\cong \wt{G}/Z(\wt{G})\hbox{ and } (G^\ast)'\cong GZ(\wt G)/Z(\wt G)\cong S,\] and we will make these identifications.
  
Let $\wt{Q}$ be a Sylow $q$-subgroup of $\wt{G}$, so that $Q:=\wt{Q}Z(\wt{G})/Z(\wt{G})$ is a Sylow $q$-subgroup of $G^\ast$.   Now, if $Z(Q)\cap (G^\ast)'\neq 1$, we can take $s$ to be a nontrivial element of this intersection, and we are done.  So we may assume $Z(Q)\cap (G^\ast)'=1$, in which case $Z(Q)\cong Z(Q)(G^\ast)'/(G^\ast)'\leq G^\ast/(G^\ast)'$ and we see that $|Z(Q)|$ divides $\gcd(n, p^a-\epsilon)_q$, since $|G^\ast: (G^\ast)'|=\gcd(n, p^a-\epsilon)$, and hence $|Z(\wt{Q})|$ divides $(p^a-\epsilon)_q\cdot\gcd(n, p^a-\epsilon)_q$.

Now, writing $n=a_0+a_1q+a_2q^2+\cdots + a_tq^t$ with $0\leq a_i< q$ for the $q$-adic expansion of $n$, we see by \cite{weir} and \cite{carterfong} that $\wt{Q}$ can be chosen as the direct product
\[\wt{Q}=\prod_{i=0}^t Q_i^{a_i}\] where $Q_i$ is a Sylow $q$-subgroup of $\GL^{\epsilon}_{q^i}(p^a)$, and this identification can be made by embedding the matrices block-diagonally into $\wt{G}$.  Note that $Z(Q_i)$ contains the Sylow $q$-subgroup of $Z(\GL^\epsilon_{q^i}(p^a))\cong C_{p^a-\epsilon}$.  Then $|Z(\wt{Q})|>(p^a-\epsilon)_q^2$, a contradiction, unless $n$ is of the form $q^i+q^j$ for some $0\leq i\leq j$.  (Note that this includes the case $n=2q^i$ for some $i\geq 1$, which can only occur if $q\neq 2$.)  Further, $|Z(\wt{Q})|>(p^a-\epsilon)_q\cdot\gcd(n, p^a-\epsilon)_q$ unless $(p^a-\epsilon)_q\mid n_q$.  Since $n_q=q^i$, this means we  must have $(p^a-\epsilon)_q\mid q^i$.

Now let $\mu$ be any nontrivial element in the Sylow $q$-subgroup of $C_{p^a-\epsilon}$, viewed inside $\mathbb{F}_{p^a}^\times$ or $\mathbb{F}_{p^{2a}}^\times$, corresponding to the cases $\epsilon=1$ and $-1$, respectively.  Consider the element $x=(\mu  I_{q^i}, I_{q^j})\in Z(\GL_{q^i}^\epsilon(p^a))\times Z(\GL_{q^j}^\epsilon(p^a))\leq Z(\wt Q)$.  Then $x$ is a non-central semisimple element of $\wt{G}$ in $Z(\wt Q)$ and satisfies $\det{x}=\mu^{q^i}=1$, where the last equality follows from the fact that $(p^a-\epsilon)_q\mid q^i$.  Hence $x\in G$ and therefore the element $s:=xZ(\wt G)$ is a nontrivial element of $GZ(\wt G)/Z(\wt G)=(G^\ast)'$ in $Z(Q)$, again a contradiction.  
 \end{proof}
 
 Using the construction in the proof of Theorem \ref{thm:mainLietype}, we find an analogue of Theorem \ref{teo: MainEu} and Corollary \ref{complete_symmetric} for the groups $\GL_n^\epsilon(r^a)$.
 
 \begin{thm}\label{thm:complete_typeA}
 Let $q\neq p$ be two prime numbers and write $\pi:=\{p,q\}$.  Let ${G}=\GL_n^\epsilon(r^a)$ for a prime $r$ and $\epsilon\in\{\pm1\}$.   Then $\irrpi{{G}} =\mathrm{Lin}({G})$ if, and only if, there is some $k\geq 0$ such that $(r, n)=(p, q^k)$ and $q\mid(p^a-\epsilon)$, up to reordering $p$ and $q$.
 \end{thm}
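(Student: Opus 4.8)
The plan is to reduce everything to a statement about semisimple elements of $G^\ast$ and then exploit the structure of a Sylow $q$-subgroup $\wt{Q}$ of $\GL_n^\epsilon(r^a)$ via Weir and Carter--Fong, exactly as in the proof of Theorem \ref{thm:mainLietype}. Since $G=\GL_n^\epsilon(r^a)$ is self-dual, I identify $G^\ast$ with $G$. I will repeatedly use that the semisimple character labelled by $(s,1)$ for a semisimple $s\in G^\ast$ has degree $|G^\ast:C_{G^\ast}(s)|_{r'}$, which equals $1$ exactly when $s$ is central. Together with Proposition \ref{prop:descdl}, this says that once $r\in\{p,q\}$ (say $r=p$ after reordering), a character lies in $\irrpi G$ precisely when its semisimple label $s$ centralizes a full Sylow $q$-subgroup of $G^\ast$ and its unipotent label $\psi$ has $\pi'$-degree. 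Throughout I take $n\geq 2$, the case $n=1$ being degenerate.

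For the reverse implication I assume $r=p$, $n=q^k$, and $q\mid r^a-\epsilon$. The heart is to show $C_{G^\ast}(\wt{Q})=Z(G^\ast)$, i.e. any $s$ centralizing $\wt{Q}$ is a scalar. Because $q\mid r^a-\epsilon$, the basic block has dimension $1$, so $\wt{Q}$ may be taken as the iterated wreath product on a single $q^k$-dimensional block whose base torus $(C_{(r^a-\epsilon)_q})^{q^k}$ acts with pairwise distinct characters on the $q^k$ coordinate lines; hence its centralizer lies in the diagonal torus, and commuting further with the transitive top group (a Sylow $q$-subgroup of $\mathfrak{S}_{q^k}$ permuting the coordinates) forces all diagonal entries to agree. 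Thus $s$ is scalar, $C_{G^\ast}(s)=G^\ast$, and the unipotent $\psi$ must have degree prime to $r$; since the only unipotent character of $\GL_n^\epsilon$ of $r'$-degree is the trivial one (the defining-characteristic part of the unipotent degree indexed by $\lambda$ is $r^{a\,n(\lambda)}$, which vanishes only for $\lambda=(n)$), every element of $\irrpi G$ is linear.

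For the forward implication I prove the contrapositive by producing a non-central semisimple $s\in C_{G^\ast}(\wt{Q})$, which yields the non-linear character labelled by $(s,1)$ in $\irrpi G$. If $r\notin\{p,q\}$ the Steinberg character $\mathrm{St}_G$ already works as in Lemma \ref{lem:reductive}, so I may assume $r=p$. If $q\nmid r^a-\epsilon$, then the smallest dimension $d$ with $q\mid|\GL^\epsilon_d(r^a)|$ satisfies $d\geq 2$; when $q\mid|G|$, the centre of a Sylow $q$-subgroup of this basic block is a nontrivial Singer-type $q$-element acting irreducibly on a $d$-dimensional subspace, so placing it on one block and the identity elsewhere gives a non-central $s\in Z(\wt{Q})$, while if $q\nmid|G|$ then $\wt{Q}=1$ and any non-central semisimple element works. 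If instead $q\mid r^a-\epsilon$ but $n\neq q^k$, then the $q$-adic digit sum $N$ of $n$ is at least $2$ and $Z(\wt{Q})$ contains the block-scalar group $(C_{(r^a-\epsilon)_q})^N$; choosing a nontrivial scalar of $q$-power order on one block and $1$ on another produces a non-central $s\in Z(\wt{Q})$. In every failing case this yields a non-linear character of $\pi'$-degree.

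The main obstacle is the explicit description of $\wt{Q}$ and of $C_{G^\ast}(\wt{Q})$: verifying that the base torus separates coordinates when the block size is $1$, that $Z(\wt{Q})$ is exactly the asserted block-scalar group, and that the Singer centre is non-central when the block size exceeds $1$. I expect the most delicate point to be the even prime $q=2$, where the notion of block size and the Sylow $2$-subgroup structure of $\GL_n^\epsilon(r^a)$ demand separate treatment (the condition $2\mid r^a-\epsilon$ is automatic for odd $r$, but the $2$-adic refinements of the Weir decomposition must be handled carefully); this is where most of the technical bookkeeping will lie.
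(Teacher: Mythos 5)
Your overall strategy---reducing to the question of which semisimple elements of $G^\ast\cong G$ centralize a Sylow $q$-subgroup, and then exploiting the Weir/Carter--Fong structure of $\wt{Q}$---is exactly the paper's, and your forward implication matches the paper's argument essentially step for step (Steinberg forces $r\in\pi$; nontrivial elements of $Z(\wt{Q})$ outside $Z(G)$ exist when $q\nmid r^a-\epsilon$ or when the $q$-adic digit sum of $n$ exceeds $1$). Your reverse implication is also the paper's argument for \emph{odd} $q$; the only small deviation is that for $\epsilon=-1$ the paper passes to $\GL_{q^k}(p^{2a})$ via Weir's theorem rather than arguing inside $\GU$ directly, and your diagonal-torus argument does adapt to that setting.

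However, there is a genuine gap at $q=2$, and it is not the ``technical bookkeeping'' you defer. Your key structural claim---that $q\mid r^a-\epsilon$ forces the basic block to have dimension $1$, so that $\wt{Q}=C_{(r^a-\epsilon)_q}\wr T_k$ with a base torus separating the coordinate lines---is false for $q=2$ whenever $r^a\equiv-\epsilon\pmod 4$. By Carter--Fong \cite{carterfong}, for $r^a\equiv 3\pmod 4$ and $\epsilon=1$ the Sylow $2$-subgroup of $\GL_{2^k}(r^a)$ is $Q_1\wr T_{k-1}$, where $Q_1$ is the \emph{semidihedral} Sylow $2$-subgroup of $\GL_2(r^a)$ (for instance, $\GL_2(3)$ has semidihedral Sylow $2$-subgroup of order $16$, not $C_2\wr C_2$ of order $8$); the basic block is $2$-dimensional and $\wt{Q}$ contains no separating base torus of the asserted form. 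So the centralizer computation you give simply does not apply here---and this is not a corner case: it covers half of all odd prime powers $r^a$, and $q=2$ is precisely the situation $\pi=\{p,2\}$ the paper needs later (Proposition \ref{prop:SLnew}). The paper closes this case with a different idea: by Carter--Fong, $N_G(\wt{Q})\cong \wt{Q}\times C_{(r^a-\epsilon)_{2'}}$ with the cyclic factor inside $Z(G)$, whence $C_G(\wt{Q})\leq Z(\wt{Q})Z(G)$; then the wreath decomposition over $2$-dimensional blocks shows that $Z(\wt{Q})$ consists of elements of $Q_1^{2^{k-1}}$ with equal components lying in $Z(Q_1)=\{\pm I\}$, i.e.\ of scalars. (Alternatively, Schur's lemma applied to the irreducible action of $Q_1$ on each $2$-dimensional block would do.) Some argument of this kind must be supplied for your proof to be complete.
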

\begin{proof}
Note that in this case, we may identify ${G}\cong {G}^\ast$. First assume that $\irrpi{{G}} =\mathrm{Lin}({G})$.  
Since the Steinberg character of $G$ has degree $r^{an(n-1)/2}$, it must be that $r=p$ or $r=q$.  Without loss, we assume that $r=p$.  

 Let $Q$ be a Sylow $q$-subgroup of $G$.  Then any character of degree prime to $q$ can be indexed by a pair $(s, \psi)$, where $s$ centralizes $Q$ and $\psi$ has degree prime to $q$, using \eqref{DL}.  However, $\mathrm{Lin}(G)$ is comprised of characters indexed by pairs $(z, 1_G)$ for $z\in Z(G)$ (see \cite[Proposition 13.30]{dignemichel}), so it follows that there are no non-central semisimple elements of $G$ centralizing $Q$.   In particular, this shows that $q\mid (p^a-\epsilon)$, since otherwise $q\nmid |Z({G})|$, so $Q\cap Z(G)=1$, yielding non-central semisimple elements of $G$ contained in $Z(Q)$.  Further, arguing as in the proof of Proposition \ref{thm:mainLietype}, we see $n=q^k$ for some integer $k$, since otherwise $|Z(Q)|\geq (p^a-\epsilon)_q^2$ and there is some semisimple element in $Z(Q)$ not contained in $Z(G)$.
 
 Conversely, assume that $r=p$, $n=q^k$, and $q\mid (p^a-\epsilon)$. Since the centralizer of a semisimple element $s\in {G}$ is a product of groups of the form $\GL_{n_i}^{\epsilon_i}(p^{a_i})$, \cite[Theorem 6.8]{mallehz} and \cite[Proposition 13.20]{dignemichel} yield that the only unipotent characters of $C_{{G}}(s)$ with degree prime to $p$ are linear.  It therefore suffices to show that if $s\in G$ is semisimple and $Q\leq C_G(s)$, then $s\in Z(G)$.
 
 Let $s\in G$ be a semisimple element centralizing $Q$.  For $i\geq 0$, let $Q_i$ and $T_i$ denote a Sylow $q$-subgroup of $\GL^\epsilon_{q^i}(p^a)$ and $\sf S_{q^i}$, respectively.    Using \cite{carterfong} and \cite{weir}, we have $Q=Q_k$ is of the form $Q_1\wr T_{k-1}$.  
 
Suppose $q$ is odd.  If $\epsilon=1$, then further $Q=Q_0\wr T_{k}\cong C_{(p^a-1)_q} \wr T_{k}$.  In particular, in this case, the normal subgroup $C_{(p^a-1)_q}^{q^k}$ of $Q$ may be viewed as the Sylow $q$-subgroup of a maximally split torus, consisting of diagonal matrices.  Since $s$ must centralize this subgroup, we have $s$ is also a diagonal matrix.   Further, with this identification, $T_k$ acts via permuting the copies of $C_{(p^a-1)_q}$, and hence the diagonal entries of a diagonal matrix. Then since $s$ must commute with this subgroup, we see that $s$ is a scalar matrix, so is in $Z(G)$. 

 Now let $\epsilon=-1$, so $G=\GU_{q^k}(p^a)\leq \GL_{q^k}(p^{2a})$.  Then since $q\mid (p^a+1)$, \cite[4(ii)]{weir} yields that $Q$ is a Sylow $q$-subgroup of $\GL_{q^k}(p^{2a})$.  As $q\mid (p^{2a}-1)$, the previous paragraph shows that $s$ must be an element of $Z(G)$.  
 
 Finally, assume $q=2$ and $\epsilon\in\{\pm1\}$.  Then by \cite{carterfong}, we have $N_G(Q)\cong Q\times C_{(p^a-\epsilon)_{2'}}$, where the factor $C_{(p^a-\epsilon)_{2'}}$ is embedded naturally as the largest odd-order subgroup of $Z(G)$, which shows that an element of $C_G(Q)$ must be a member of $Z(Q)Z(G)$.  But since $x\in Z(Q)$ must commute with the action of $T_{k-1}$,  $Z(Q)$ must be comprised of elements of $Q_1^{2^{k-1}}$ whose components are all the same.  Further, these components must be in $Z(Q_1)$.  Considering the description in \cite{carterfong} of the Sylow 2-subgroups $Q_1$ of $\GL^\epsilon_2(p^a)$, we see that $Z(Q)$ therefore consists of scalar matrices, and hence elements of $Z(G)$.
\end{proof}

\begin{cor}\label{cor:complete_typeA}
 Let $q\neq p$ be two prime numbers and write $\pi:=\{p,q\}$.  Let ${G}=\GL_n^\epsilon(r^a)$ for a prime $r$ and $\epsilon\in\{\pm1\}$.   Then $\Gamma'(G)$ is not complete if, and only if, there is some $k\geq 0$ such that $(r, n)=(p, q^k)$ and $q\mid(p^a-\epsilon)$, up to reordering $p$ and $q$.  In this case, the subgraphs of $\Gamma'(G)$ defined by $\pi(|G|)\smallsetminus\{p\}$ and by $\pi(|G|)\smallsetminus\{q\}$ are complete, and there is no edge between $p$ and $q$.
\end{cor}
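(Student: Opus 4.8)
The plan is to read the corollary as a direct translation of Theorem~\ref{thm:complete_typeA} through the definition of the graph $\Gamma'(G)$, so throughout I assume $n\geq 2$ (for $n=1$ the group $G$ is abelian, has only linear characters, and $\Gamma'(G)$ has no edges, a case I would dispose of separately). First I would record the dictionary between edges and the sets $\mathrm{Irr}_{\{p',q'\}'}(G)$. Since a linear character has degree $1$, which is coprime to everything, we always have $\mathrm{Lin}(G)\subseteq \mathrm{Irr}_{\{p',q'\}'}(G)$; and by definition two vertices $p',q'\in\pi(|G|)$ are joined by an edge exactly when there is a non-linear $\chi\in\mathrm{Irr}(G)$ of $\{p',q'\}'$-degree, i.e.\ exactly when this inclusion is strict. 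Consequently $\Gamma'(G)$ fails to be complete if and only if there is a pair of distinct primes $p',q'$ dividing $|G|$ with $\mathrm{Irr}_{\{p',q'\}'}(G)=\mathrm{Lin}(G)$.

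Next I would feed this into Theorem~\ref{thm:complete_typeA}, which says that a pair $\{p',q'\}$ realises $\mathrm{Irr}_{\{p',q'\}'}(G)=\mathrm{Lin}(G)$ precisely when, after possibly swapping the two primes, $p'=r$, $n=(q')^k$ for some $k\geq 0$, and $q'\mid (r^a-\epsilon)$. Here I would note two small facts: the defining characteristic $r$ divides $|G|$ when $n\geq 2$, and any prime dividing $r^a-\epsilon$ both divides $|G|$ (since $r^a-\epsilon$ is a factor of $|G|$) and is automatically different from $r$. Hence $\Gamma'(G)$ is non-complete if and only if such a pair exists, which is exactly the condition asserted in the corollary.

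For the last sentence I would establish that the missing edge is unique and equals $\{r,q'\}$. The key observation is that $n=(q')^k$ with $n\geq 2$ forces $q'$ to be the unique prime divisor of $n$ (and $k\geq 1$); combined with the forced choice $p'=r$, at most one pair can satisfy the hypotheses of Theorem~\ref{thm:complete_typeA}. Writing $p:=r$ and $q:=q'$ for that pair, any pair of vertices avoiding $p$ cannot meet those hypotheses (one member must be $r$), so the subgraph on $\pi(|G|)\setminus\{p\}$ is complete; any pair avoiding $q$ fails as well, since the only prime whose power equals $n$ is $q$, so the subgraph on $\pi(|G|)\setminus\{q\}$ is complete; and the single absent edge is $\{p,q\}$ itself.

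The only point that is not pure bookkeeping is this uniqueness step, which guarantees that exactly one edge is missing; everything else is an unwinding of the definition of $\Gamma'(G)$ together with a citation of Theorem~\ref{thm:complete_typeA}. I therefore expect no genuine obstacle beyond carefully handling the reordering of $p$ and $q$ and the degenerate $n=1$ case noted above.
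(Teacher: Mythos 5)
Your proposal is correct and matches the paper's intent: the paper states this corollary without proof as an immediate reformulation of Theorem \ref{thm:complete_typeA} via the definition of $\Gamma'(G)$, which is exactly the unwinding you carry out. Your uniqueness observation (that $p=r$ and $q$ the unique prime divisor of $n=q^k$ are forced, so exactly one edge can be missing) is the only non-trivial point and you handle it correctly.
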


To conclude this section, we make some remarks about the corresponding statement to Proposition \ref{prop:Eunew} for groups of Lie type. We first remark that if $S$ is a simple group of Lie type defined over a field of characteristic $r$ and $\pi$ is a set of primes not containing $r$, then there exists a member of $\irrpi S$ that extends to $\aut S$, taking for example the Steinberg character.  Hence we are interested in the question of when there exists a member of $\irrpi S$ that extends to $\aut S$, where $\pi=\{p, q\}$ with $p=r$.  

For example,  for $S={\rm PSp}_{2n}(p^a)$ with $p$ odd and $\pi=\{p,2\}$, \cite[Theorem 6.8]{mallehz} and \cite[Proposition 4.8]{malleext} yield that there are no such characters.  More generally, given \cite[Theorem 6.8]{mallehz}, an important step is to determine when there exists a semisimple character of degree prime to $q$ that extends to $\aut S$.   
Although the general consideration of this question is beyond the scope of the current article, we make use of some of the techniques used already in this section to answer it in the case of $\PSL_n(p^a)$ with $\pi=\{p,2\}$.


\begin{prop}\label{prop:SLnew}
Let $S=\PSL_n^\epsilon(p^a)$ be simple for an odd prime $p$ and $\epsilon\in\{\pm1\}$.  Let $q$ be a prime dividing $p^a-\epsilon$ and let $\pi:=\{p,q\}$.  If $n$ is not of the form $n=q^i$ or $n=q^i+q^j$ for any $0\leq i\leq j$ and $n\neq 4q^i$ if $4\nmid (p^a-\epsilon)$, then there exists a non-principal $\theta\in\irrpi S$ that extends to $\aut S$.  Further, if $\pi=\{p,2\}$, then there exists a non-principal $\theta\in\irrpi S$ that extends to $\aut S$ if, and only if, $n\neq 2^i$ for any $i\geq 1$.
\end{prop}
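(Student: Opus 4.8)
The plan is to reduce the statement to a question about semisimple elements and then exploit the machinery already developed in the proof of Theorem~\ref{thm:mainLietype}. By \cite[Theorem 6.8]{mallehz} the semisimple character $\chi_s$ attached to a semisimple element $s\in G^\ast$ has degree $|G^\ast:C_{G^\ast}(s)|_{r'}$, which is prime to $p=r$; so $\chi_s$ lies in $\irrpi{G}$ precisely when $q\nmid|G^\ast:C_{G^\ast}(s)|$, i.e. when $C_{G^\ast}(s)$ contains a full Sylow $q$-subgroup $Q$ of $G^\ast$. Writing $G=\SL_n^\epsilon(p^a)$, $\wt G=\GL_n^\epsilon(p^a)$ and identifying $G^\ast\cong\wt G/Z(\wt G)$ as in the proof of Theorem~\ref{thm:mainLietype}, I would first construct a semisimple $s\in Z(Q)\cap(G^\ast)'$ that is \emph{not} central in $G^\ast$; by Lemma~\ref{lem:centerkernel} and \cite[Lemma 4.4(ii)]{NavarroTiep13} this descends to a genuine non-principal character of $S$ of $\pi'$-degree. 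The non-centrality of $s$ is exactly what guarantees $\theta\neq 1_S$, and the hypotheses on $n$ are precisely those ruling out the small cases where $Z(Q)\cap(G^\ast)'$ is forced to be central.

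The heart of the argument is therefore to show that under the stated hypotheses on $n$, the intersection $Z(Q)\cap(G^\ast)'$ properly contains the image of $Z(\wt G)$. Using \cite{weir} and \cite{carterfong}, I would write $\wt Q=\prod_{i=0}^t Q_i^{a_i}$ block-diagonally, where $n=\sum a_iq^i$ is the $q$-adic expansion and $Q_i\in\syl{q}{\GL_{q^i}^\epsilon(p^a)}$, so that each factor $Q_i$ contains the Sylow $q$-subgroup of the central torus $Z(\GL_{q^i}^\epsilon(p^a))\cong C_{p^a-\epsilon}$. The dimension of $Z(\wt Q)$ grows with the number of blocks, and I would compute $\det$ on the resulting scalar blocks to detect when a non-scalar element of $Z(\wt Q)$ has determinant $1$ (hence lies in $G$) while not being scalar (hence is non-central). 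This is the same determinant computation used to produce the element $x=(\mu I_{q^i},I_{q^j})$ in Theorem~\ref{thm:mainLietype}; the forbidden shapes $n=q^i$, $n=q^i+q^j$, and the exceptional $n=4q^i$ when $4\nmid(p^a-\epsilon)$ are exactly the configurations in which every determinant-$1$ element of $Z(\wt Q)$ is forced to be scalar. The extendibility to $\aut S$ for the character so produced follows from \cite[Theorem 6.8]{mallehz} together with the behaviour of semisimple characters under field, diagonal, and graph automorphisms.

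For the sharp statement when $\pi=\{p,2\}$, I would treat $q=2$ separately and more carefully, since the general bound only excludes $n\in\{2^i,\,2^i+2^j,\,4\cdot2^i\}$ whereas the claim is that the construction succeeds for \emph{every} $n\neq 2^i$. Here $4\mid(p^a-\epsilon)$ or $2\,\|\,(p^a-\epsilon)$ must be distinguished, and I would use the explicit description in \cite{carterfong} of $N_G(Q)$ and of $Z(Q_1)$ for $\GL_2^\epsilon(p^a)$ to show that outside a genuine power of $2$ one can always find a non-scalar determinant-$1$ element centralizing $Q$. For the converse direction, when $n=2^i$ one has $N_G(Q)\cong Q\times C_{(p^a-\epsilon)_{2'}}$ by \cite{carterfong} exactly as in the final paragraph of Theorem~\ref{thm:mainLietype}, forcing every semisimple element centralizing $Q$ to be scalar, so no non-principal semisimple $\pi'$-character exists; combined with \cite[Theorem 6.8]{mallehz} this shows $\irrpi{S}$ contains no non-principal extendible character, completing the equivalence.

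I expect the main obstacle to be the extendibility to $\aut S$ rather than the mere existence of the $\pi'$-character: producing $s$ with the right centralizer is a direct adaptation of Theorem~\ref{thm:mainLietype}, but verifying that the associated semisimple character is stabilized by the relevant field and graph automorphisms—so that it actually extends to the full automorphism group—requires controlling the Galois and graph-automorphism action on the $G^\ast$-class of $s$, which is the delicate point and the reason the sharp $q=2$ statement is singled out.
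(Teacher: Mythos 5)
Your high-level strategy coincides with the paper's (semisimple characters attached to determinant-one elements of $Z(\wt{Q})$, descent to $S$, then extendibility), but two of your key claims are wrong in ways that would derail the argument. First, the excluded values of $n$ are \emph{not} ``exactly the configurations in which every determinant-$1$ element of $Z(\wt{Q})$ is forced to be scalar'': for $n=q^i+q^j$ the proof of Theorem \ref{thm:mainLietype} exhibits the non-scalar determinant-one element $(\mu I_{q^i}, I_{q^j})\in Z(\wt{Q})$, and for $n=4q^i$ with $4\nmid(p^a-\epsilon)$ the element $\mathrm{diag}(-I_{2q^i},I_{2q^i})$ is another. The real obstructions, which you flag as ``delicate'' but never resolve, are (i) the $\wt{G}$-class of $s$ must be stable under field and graph automorphisms, which in practice forces the eigenvalues of $s$ to be $\pm 1$ or an inverse-closed set such as $\{\omega,\omega^{-1},1\}$ with $o(\omega)=4$ (whence the condition $4\mid(p^a-\epsilon)$ when $n=4q^i$); and (ii) one needs $sz\not\sim s$ for all $1\neq z\in Z(\wt{G})$ so that $\chi_s$ restricts irreducibly to $\SL_n^\epsilon(p^a)$ --- this is exactly what fails for $\mathrm{diag}(-I_{2q^i},I_{2q^i})$ when $n=4q^i$, since then $-s\sim s$. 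Without isolating (i) and (ii) you cannot make the correct choices of $s$ (the paper uses $\mathrm{diag}(-I_{2q^i},I_{n-2q^i})$, $\mathrm{diag}(-I_{q^i},-I_{q^j},I_{n-q^i-q^j})$, or $\mathrm{diag}(\omega I_{q^i},\omega^{-1}I_{q^i},I_{2q^i})$ according to the shape of $n$), and your criterion would wrongly predict that $n=q^i+q^j$ admits no suitable element at all. Also, invariance under $\aut S$ does not by itself yield an extension; the paper invokes \cite[Proposition 3.4 and Lemma 2.13]{spath12} for that step.

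Second, your converse for $\pi=\{p,2\}$ is a non sequitur as written: from ``every semisimple element of $\wt{G}$ centralizing $Q$ is scalar'' you conclude that ``no non-principal semisimple $\pi'$-character exists'' and hence that $\irrpi{S}$ has no non-principal extendible member. But for $n=2^i$ the set $\irrpi{S}$ \emph{does} contain non-principal characters --- Theorem \ref{thm:mainLietype} guarantees $|\irrpi{S}|>1$ --- because the relevant semisimple elements live in $G^\ast\cong\wt{G}/Z(\wt{G})$ rather than in $\wt{G}$. What fails for $n=2^i$ is only extendibility, and to see that you need the bridge the paper uses: Theorem \ref{thm:complete_typeA} gives $\irrpi{\wt{G}}=\mathrm{Lin}(\wt{G})$ when $n=2^i$, and Lemma \ref{lem:silly} then shows that a non-principal $\theta\in\irrpi{S}$ extending to $\aut S$ would force a non-linear $\pi'$-character of a group having $S$ as composition factor, such as $\wt{G}$ --- a contradiction. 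Supplying that Clifford-theoretic step (or reproving it) is essential; without it the ``only if'' direction does not follow from the Sylow normalizer computation alone.
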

\begin{proof}

Assume $n\neq q^i$ for any $i\geq 1$, and further assume $n\neq q^i+q^j$ for any $0\leq i\leq j$ if $q>2$ and that $n\neq 4q^i$ if $p^a\equiv -\epsilon\pmod 4$.  Then we may write $n=a_0+a_1q+a_2q^2+.... +a_tq^t$ with $0\leq a_i<q$ for each $i$ and $\sum_{i=1}^ta_i>2$ if $q$ is odd and $\sum_{i=1}^ta_i>1$ if $q=2$.  Let $G=\SL^\epsilon_n(p^a)$ and $\wt{G}=\GL^\epsilon_n(p^a)$, so $\wt{G}^\ast\cong \wt{G}$, and we make this identification.  Let $\wt{Q}$ be a Sylow $q$-subgroup of $\wt{G}$, so that we may write $\wt{Q}=\prod_{i=1}^t Q_i^{a_i}$ embedded block-diagonally, as before, where $Q_i$ is a Sylow $q$-subgroup of $\GL^\epsilon_{q^i}(p^a)$. 

Let $i$ be the first index for which $a_i\neq 0$.  If $q=2$, let $s\in \wt{G}$ be of the form $\mathrm{diag}(-I_{2^i},I_{n-2^i})$.  If $q>2$, $p^a\equiv\epsilon \pmod 4$, and $n=4q^i$, let $s=\mathrm{diag}(\omega I_{q^i}, \omega^{-1} I_{q^i}, I_{2q^i})$ where $\omega\in C_{p^a-\epsilon}$ has order $4$.  Otherwise, let $s$ be of the form $\mathrm{diag}(-I_{2q^i}, I_{n-2q^i})$ if $a_i\geq 2$.  If $a_i=1$, let $j$ be the next index for which $a_j\neq 0$, and let $s$ be of the form $\mathrm{diag}(-I_{q^i}, -I_{q^j}, I_{n-q^i-q^j})$.  Then note that $\det s=1$, so $s\in G=\wt{G}'$ and the semisimple character $\chi_s$ corresponding to $(s, 1_{C_{\wt{G}}(s)})$ is trivial on $Z(\wt{G})$.  Further, $s\in \prod_{i=1}^t Z(\GL_{q^i}(p^a))^{a_i}$, so it centralizes $\wt{Q}$, and hence $\chi_s \in\irrpi{\wt{G}}$.  Further, since conjugacy classes of semisimple elements in $\wt{G}$ are determined by the eigenvalues, we see that $sz$ is not conjugate to $s$ for any nontrivial $z\in Z(\wt{G})$. Then since the number of irreducible constituents of the restriction of $\chi_s$ to $G$ is exactly the number of irreducible characters $\theta \in \Irr(\widetilde{G}/G)= \{{\chi}_z \mid z \in Z(\widetilde{G}^{\ast})\}$ satisfying ${\chi}_s\theta = {\chi}_s$,  \cite[13.30]{dignemichel} yields that $\chi_s|_G$ is irreducible, yielding a character $\chi$ of $\irrpi S$.  Further, $s$ is $\out S$-invariant by construction, and hence so is $\chi_s$ .   We may then apply \cite[Proposition 3.4 and proof of Lemma 2.13]{spath12} to see that $\chi$ extends to $\aut S$.  The last statement follows by considering Lemma \ref{lem:silly} below together with Theorem \ref{thm:complete_typeA}.
\end{proof}

\section{Groups whose $\pi'$-degree characters are linear}\label{Further}
In this section we provide further discussion on finite groups $G$ satisfying $\mathrm{Irr}_{\pi'}(G)=\mathrm{Lin}(G)$, where $\pi=\{p,q\}$ for some distinct primes $p$ and $q$.

\smallskip

The Navarro-Wolf theorem (see Theorem \ref{NW}) can be slightly generalized by using the following consequence of Wolf's $\pi$-version of the McKay conjecture for $\pi$-separable groups.

\begin{lem}\label{piMcKay} Let $M \nor G$ and let $\pi$ be a set of primes. Suppose that $G/M$ is $\pi$-separable and let $H/M$ be a Hall $\pi$-subgroup of $G/M$. Write $N=\norm G H$. Then $|\irrpi G|=|\irrpi N|$.
\end{lem}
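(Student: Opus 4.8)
The plan is to reduce to the absolute case $M=1$, which is precisely Wolf's $\pi$-version of the McKay conjecture for $\pi$-separable groups, by running Clifford theory over $M$. Throughout write $T=G_\theta$ for the inertia group of $\theta\in\Irr(M)$ in $G$, and let $\irrpi{G|\theta}$ denote the set of $\chi\in\irrpi G$ lying over $\theta$. First I would record two structural facts. Since $M\le H$, a direct preimage computation gives $N/M=N_{G/M}(H/M)$, so $N$ is the full preimage of the normalizer of $\bar H:=H/M$ in $\bar G:=G/M$; and because $\bar G$ is $\pi$-separable, $\bar H$ is a normal Hall $\pi$-subgroup of $\bar N:=N/M$, so $|\bar N:\bar H|$ is a $\pi'$-number.

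The main reduction is to the $H$-invariant characters of $M$. If $\chi\in\irrpi G$ lies over $\theta$, then $\chi(1)=|G:T|\psi(1)$ for its Clifford correspondent $\psi\in\Irr(T|\theta)$, so $|G:T|$ is a $\pi'$-number; as $M\le T$ this says $T/M$ contains a Hall $\pi$-subgroup of the $\pi$-separable group $\bar G$, and after replacing $\theta$ by a $G$-conjugate (using conjugacy of Hall $\pi$-subgroups in $\bar G$) I may assume $H\le T$, i.e.\ $\theta$ is $H$-invariant. Let $\Omega=\{\theta\in\Irr(M): H\le G_\theta\}$ be the set of $H$-invariant characters; it is stable under $N$ since $N$ normalizes $H$. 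I would then show that taking $G$-orbits induces a bijection between the $N$-orbits on $\Omega$ and the $G$-orbits on $\Irr(M)$ that meet $\Omega$: surjectivity is clear, and for injectivity, if $\theta,\theta'\in\Omega$ with $\theta'=\theta^g$, then $H$ and $H^{g^{-1}}$ are both Hall $\pi$-subgroups of $T=G_\theta$, so $H^{tg}=H$ for some $t\in T$ by Hall conjugacy in $\bar T=T/M$, whence $g=t^{-1}n$ with $n\in N$ and $\theta'=\theta^{t^{-1}n}=\theta^n$.

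With this matching in place, fix an $H$-invariant representative $\theta$ (so $\theta$ is $T$-invariant and $H\le T$) and note that $N\cap T=N_T(H)$ is the inertia group of $\theta$ in $N$, with $(N\cap T)/M=N_{\bar T}(\bar H)$ and $|N:N\cap T|$ a $\pi'$-number. Two Clifford correspondences, one in $G$ over $T$ and one in $N$ over $N\cap T$ (both indices being $\pi'$, so that $\pi'$-degree is preserved), give
\begin{equation*}
|\irrpi{G|\theta}|=|\irrpi{T|\theta}| \quad\text{and}\quad |\irrpi{N|\theta}|=|\irrpi{(N\cap T)|\theta}|.
\end{equation*}
Summing over orbit representatives and invoking the orbit bijection, the lemma reduces to the single equality $|\irrpi{T|\theta}|=|\irrpi{(N\cap T)|\theta}|$ for the $T$-invariant character $\theta$.

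I expect this last equality to be the main obstacle. The idea is to pass to a character triple $(T^\ast,M^\ast,\theta^\ast)$ isomorphic to $(T,M,\theta)$ with $M^\ast$ central and $\theta^\ast$ linear; such an isomorphism preserves the quotient $T/M\cong T^\ast/M^\ast$ (hence its $\pi$-separability) together with the images of $H/M$ and of $N_T(H)/M$, and scales degrees over $\theta$ by the constant factor $\theta(1)$. Thus $\irrpi{T|\theta}$ is empty unless $\theta(1)$ is a $\pi'$-number, and otherwise the triple isomorphism carries $\pi'$-degree characters over $\theta$ to $\pi'$-degree characters over $\theta^\ast$, where $M^\ast$ is central. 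Wolf's $\pi$-version of McKay for the $\pi$-separable group $T^\ast/M^\ast$, in its form for faithful linear central $\theta^\ast$ (equivalently, for projective characters), then yields $|\irrpi{T^\ast|\theta^\ast}|=|\irrpi{N_{T^\ast}(H^\ast)|\theta^\ast}|$, and transporting back along the isomorphism gives the claim. The delicate points are to confirm that the Hall $\pi$-subgroup and its normalizer are tracked correctly through the character-triple isomorphism and that Wolf's theorem is available in this relative (central, or projective) form.
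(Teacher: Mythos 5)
Your proposal follows essentially the same route as the paper's proof: reduce to $H$-invariant characters of $M$ using Hall conjugacy in the $\pi$-separable quotient, match $N$-orbits with $G$-orbits, pass to inertia groups via the Clifford correspondence (legitimate since the relevant indices are $\pi'$-numbers), and then use a character-triple isomorphism $(G^*,M^*,\theta^*)$ with $M^*$ central to invoke Wolf's relative $\pi$-version of the McKay conjecture. The single point you flag as delicate---whether Wolf's theorem applies in this relative, central-character form---is precisely what the paper settles by citing Corollary 1.15 of \cite{W}, read with $\pi=\pi(|G^*|)$ so that $B_\pi(X)=\Irr(X)$ for all $X\leq G^*$, and with $\omega$ taken to be the given set of primes.
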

\begin{proof} Given $\chi \in \irrpi G$. Note that $\chi$ lies over a single $N$-orbit of $H$-invariant $\irrpi M$. Let $\phi$ be under $\chi$. Then $\phi \in \irrpi M$ and $|G:G_\phi|$ is a $\pi'$-number. Hence $H^{x^{-1}}/M\sbs G_\phi/M$ for some $x \in G$. In particular, $H\sbs G_{\phi^x}$. Write $\theta=\phi^x$, then $\theta$ is $H$-invariant and lies under $\chi$. Moreover, if $\theta'\in \irr M$ is another $H$-invariant character lying under $\chi$, then $\theta'=\theta^y$ for some $y \in G$. Then $H, H^y\sbs G_{\theta^y}$. Hence, there exists some $x \in G_{\theta^y}$ such that $H=H^{yx}$ and $\theta^{yx}=\theta^y=\theta'$, as claimed. 
The same happens for each $\psi \in \irrpi N$ by the same argument.

 Let $\Theta$ be a set of representatives of the $N$-orbits on the set of $H$-invariant $\irrpi M$. Then 
$$\irrpi G=\dot{\bigcup}_{\theta \in \Theta}\irrpi{G|\theta} \text{ \ and \ } \irrpi N=\dot{\bigcup}_{\theta \in \Theta}\irrpi{N|\theta}.$$
It will be enough to show that $|\irrpi{G|\theta}|=|\irrpi{N|\theta}|$ for each $\theta \in \Theta$. Since $|G:G_\theta|$ and $|N:N_\theta|$ are $\pi'$-numbers, by the Clifford correspondence we may assume $G_\theta=G$. By using projective representations we can find a character triple $(G^*, M^*, \theta^*)$ isomorphic to $(G, M, \theta)$ such that $M^*\sbs \zent{G^*}$. In particular, $G^*$ is $\pi$-separable. We can now apply Corollary 1.15 \cite{W} to get that $|\irrpi {G^*|\theta^*}|=|\irrpi{N^*|\theta^*}|$, with some caution. In Corollary 1.15 of \cite{W}, we should take $\pi=\pi(|G^*|)$ so that $B_\pi(X)=\irr{ X}$ for every $X\leq G^*$, and then let $\omega$ be equal to the set of primes $\pi$ given by our statement. 
\end{proof}

Let $G$ be a finite group. Then the solvable residual of $G$ is the smallest normal subgroup $M$ of $G$ such that $G/M$ is solvable. In particular, $M$ is perfect. Notice that for every $M\sbs H\leq G$, we have that $M\sbs H'$.

\begin{thm}\label{solvable_residual} Let $G$ be a group and let $\pi$ be a set of primes. Write $M$ for the solvable residual of $G$, and let $H/M$ be a Hall $\pi$-subgroup of $G/M$. 
Then $\irrpi G={\rm Lin}(G)$  if, and only if, $\norm {G/M}{H/M}\cap G'/M=H'/M$ and $H$ acts on $\irrpi M$ with fixed points $\{ 1_M \}$.
\end{thm}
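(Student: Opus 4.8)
The plan is to combine Lemma~\ref{piMcKay} with the Navarro--Wolf theorem (Theorem~\ref{NW}) by reducing the condition on $G$ to a condition on the quotient $G/M$ together with a separate condition describing how characters of $\pi'$-degree can lie over characters of $M$. The guiding observation is that $M$, being perfect, contributes only the principal character to $\mathrm{Lin}(M)$, while $G/M$ is solvable and hence falls squarely into the Navarro--Wolf setting. So I would first set up the Clifford-theoretic bookkeeping of $\irrpi G$ relative to the normal subgroup $M$: every $\chi \in \irrpi G$ lies over some $N$-orbit of $H$-invariant characters of $\irrpi M$ (exactly as in the proof of Lemma~\ref{piMcKay}), and the irreducible constituents of degree prime to $\pi$ of the induced characters match up between $G$ and $N=\norm G H$.

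\textbf{Key steps.} First I would show that $\irrpi G = \mathrm{Lin}(G)$ forces $H$ to act on $\irrpi M$ with only $1_M$ fixed. Indeed, any $H$-invariant $\theta \in \irrpi M$ lies under some $\chi \in \irrpi G$; if $\theta \neq 1_M$ then $\chi$ cannot be linear (a linear character of $G$ restricts to a \emph{linear} character of $M$, and since $M$ is perfect that restriction is $1_M$). Thus $\irrpi G = \mathrm{Lin}(G)$ implies that the only $H$-invariant character in $\irrpi M$ is $1_M$, which is precisely the fixed-point condition. Conversely, when $H$ fixes only $1_M$ in $\irrpi M$, the decomposition $\irrpi G = \dot\bigcup_{\theta \in \Theta}\irrpi{G|\theta}$ collapses so that every $\chi \in \irrpi G$ lies over $1_M$, i.e.\ $M \sbs \ker\chi$, and therefore $\chi$ is inflated from $\irrpi{G/M}$. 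Second, I would invoke Theorem~\ref{NW} applied to the solvable group $G/M$ with Hall $\pi$-subgroup $H/M$: under the fixed-point condition, $\irrpi G = \mathrm{Lin}(G)$ holds if and only if $\irrpi{G/M} = \mathrm{Lin}(G/M)$, which by Navarro--Wolf is equivalent to $\norm{G/M}{H/M} \cap (G/M)' = (H/M)'$. Finally I would translate $(G/M)' = G'/M$ and $(H/M)' = H'/M$ (the latter using $M \sbs H'$ since $M$ is perfect and $M \sbs H$), giving the stated condition $\norm{G/M}{H/M} \cap G'/M = H'/M$.

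\textbf{Main obstacle.} The subtle point, and the one I expect to need most care, is the passage between linear characters of $G$ and characters inflated from $G/M$. One must verify that $\mathrm{Lin}(G)$ coincides with the characters of $G$ trivial on $M$ that are linear, and that under the fixed-point hypothesis every member of $\irrpi G$ really is trivial on $M$; this is where the perfectness of $M$ (so that $G/M$ carries all linear characters of $G$, i.e.\ $\mathrm{Lin}(G) = \mathrm{Lin}(G/M)$ via inflation) does the essential work. The Clifford-theoretic matching from Lemma~\ref{piMcKay} guarantees that no $\pi'$-degree character lying over a nontrivial $H$-invariant $\theta$ is overlooked, so the two conditions in the statement are genuinely independent and jointly necessary and sufficient. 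Once these identifications are pinned down, the equivalence follows by assembling the two halves and citing Theorem~\ref{NW} for the solvable quotient.
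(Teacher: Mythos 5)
Your overall architecture is the same as the paper's, and two of the three pieces are sound: the sufficiency argument (every $\chi\in\irrpi G$ lies over an $H$-invariant member of $\irrpi M$, which under the fixed-point hypothesis must be $1_M$, so $\chi$ is inflated from $\irrpi{G/M}={\rm Lin}(G/M)$ by Navarro--Wolf) and the deduction of the Navarro--Wolf condition from $\irrpi G={\rm Lin}(G)$ by passing to the quotient $G/M$. The genuine gap is in the necessity of the fixed-point condition. You assert that ``any $H$-invariant $\theta\in\irrpi M$ lies under some $\chi\in\irrpi G$,'' but this is precisely the hard point and it does not follow from Clifford theory alone: knowing $H\leq G_\theta$ only tells you that $|G:G_\theta|$ is a $\pi'$-number, and you still must produce a character of $G_\theta$ over $\theta$ of degree $\theta(1)$ times a $\pi'$-number. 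Lemma~\ref{piMcKay} does not do this for you; as stated it gives only the equality $|\irrpi G|=|\irrpi N|$ with $N=\norm G H$, and even the orbit-wise refinement $|\irrpi {G|\theta}|=|\irrpi{N|\theta}|$ from its proof says nothing about whether these sets are nonempty.

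The missing ingredient is an extension argument that uses the perfectness of $M$ a second time. Since $M$ is perfect, a nontrivial $H$-invariant $\theta\in\irrpi M$ satisfies $o(\theta)=1$; as $\theta(1)$ is a $\pi'$-number and $|H:M|$ is a $\pi$-number, Corollary 6.28 of \cite{Isa76} yields an extension $\phi\in\irr H$ of $\theta$, and any constituent $\psi$ of $\phi^N$ lies in $\irrpi N$ (because $|N:H|$ is a $\pi'$-number) and is nonlinear. One must then transfer this contradiction from $N$ back to $G$: the paper does so by first proving $\irrpi N={\rm Lin}(N)$, which requires the chain $|\irrpi G|=|\irr{G/G'}|=|\irr{N/H'}|\leq|\irrpi N|$ (using $G=G'N$ via a Frattini argument and $N\cap G'=H'$ from the already-established Navarro--Wolf condition) combined with Lemma~\ref{piMcKay}. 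Without the extension step and this transfer, your claim that a nontrivial $H$-fixed $\theta$ is detected by $\irrpi G$ is unsupported, and this is the step carrying most of the weight of the theorem.
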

\begin{proof}
To prove the ``only if'' implication, note that the first hypothesis implies that $\irrpi {G/M}={\rm Lin}(G/M)$ by \cite{NW}. Let $\chi \in \irrpi G$ and let $\theta \in \irr M$ be under $\chi$. If $\theta=1_M$, then $\chi \in \irrpi {G/M}={\rm Lin}(G/M)={\rm Lin} (G)$. Otherwise $\theta\neq 1_M$ is not $H$-fixed by hypothesis. Hence $|G:G_\theta|$ is divisible by some prime in $\pi$, and $\chi(1)$ is not a $\pi'$-number, a contradiction.

To prove the converse, first notice that $\irrpi G={\rm Lin} (G)$ implies that $\irrpi{G/M}={\rm Lin}(G/M)$. Write $N=\norm G H$. By \cite{NW} we have that $N/M\cap G'/M=H'/M$. Then $G'H/M\nor G/M$ and, by the Frattini argument, we have that $G/M=(G'H/M)N/M$. Hence $G=G'N$ and
$$|\irrpi G|=|\irr{G/G'}|=|\irr{N/H'}|\leq |\irrpi N| \, ,$$ 
as every $\irr{N/H'}$ is linear. By Lemma \ref{piMcKay}, the equality $|\irrpi G|=|\irrpi N|$ forces $\irrpi N={\rm Lin} (N)$.
Assume that $1_M\neq \theta\in \irrpi M$ is $H$-fixed. Since $M$ is perfect, then $\theta(1)>1$ and $o(\theta)=1$. Then $\theta$ extends to some $\phi \in \irr H$ by Corollary 6.28 of \cite{Isa76}. Let $\psi \in \irr{N|\phi}$, then $\psi \in \irrpi N$ (by  Corollary 11.29 of \cite{Isa76}) is non-linear, a contradiction.  
\end{proof}

Note that if $\pi$ consists of two primes, then $\irrpi M \supsetneq \{ 1_M\}$ by Theorem A and hence $H/M \in {\rm Hall}_\pi(G/M)$ above is non-trivial.
\enskip

By using the following Lemma we can assure that certain non-abelian simple groups do not appear as composition factors of groups satisfying $\irrpi G={\rm Lin}(G)$. (Note that non-abelian composition factors of $G$ appear as composition factors of its solvable residual).

\begin{lem}\label{lem:silly} Let $S$ be a non-abelian simple composition factor of a group $G$. Let $\pi$ be a set of primes. If some non-principal $\theta \in \irrpi S$ extends to $\aut S$, then ${\rm Lin}(G) \subsetneq \irrpi G$.
\end{lem}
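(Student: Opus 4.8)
The goal is to show that if some non-principal $\theta\in\irrpi S$ extends to $\aut S$, where $S$ is a non-abelian composition factor of $G$, then $G$ has a non-linear irreducible character of $\pi'$-degree. The plan is to first reduce to a convenient subquotient of $G$ in which $S$ appears as a chief factor, and then propagate the given non-principal $\pi'$-degree character of $S$ upward using the extendibility hypothesis together with standard Clifford theory.

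First I would locate a normal section of $G$ realizing $S$. Since $S$ is a composition factor, there exist normal subgroups $K\nor L\nor\!\!\!\triangleright\, G$ (more precisely, $L/K$ is a chief factor of $G$ isomorphic to a direct product $S^t$ of copies of $S$). Replacing $G$ by $N_G(\text{one copy})$ or passing to $L/K$ via the quotient $G/K$ — which inherits nothing harmful since the property $\irrpi{-}\supsetneq{\rm Lin}(-)$ transfers up from quotients — I would reduce to the situation where $M:=L/K\cong S^t$ is a minimal normal subgroup of $\bar G:=G/K$. The key point is that it suffices to produce a non-linear $\chi\in\irrpi{\bar G}$, because $\bar G$ is a quotient of $G$ and every irreducible character of a quotient inflates to an irreducible character of $G$ of the same degree, so ${\rm Lin}(G)\subsetneq\irrpi G$ will follow once ${\rm Lin}(\bar G)\subsetneq\irrpi{\bar G}$.

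Next, I would build the character on $M$ and extend it to $\bar G$. Let $\theta\in\irrpi S$ be the given non-principal character that extends to $\aut S$. Form $\vartheta:=\theta\times\theta\times\cdots\times\theta\in\irr M$, which is a non-principal character of $\pi'$-degree $\theta(1)^t$. The crucial structural fact is that $\bar G$ acts on $M\cong S^t$ and this action is realized, up to the permutation of the $t$ factors, inside $\aut S \wr \fS_t$; concretely $\bar G/C_{\bar G}(M)$ embeds into $\aut(S)\wr\fS_t$. Since $\theta$ extends to $\aut S$ and $\vartheta$ is invariant under the permutation of the factors (all components equal), $\vartheta$ is invariant in $M(\aut S\wr\fS_t)$ and in particular it is $\bar G$-invariant and extends to $\bar G$. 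This is the step where the hypothesis ``$\theta$ extends to $\aut S$'' does the real work: it is exactly what allows the tensor-induced character $\vartheta$ to extend across the full automorphism-and-permutation action, via the standard extension theory for wreath products (for instance, the tensor-induced construction, or an application of results on characters of groups with a normal subgroup that is a direct power of a simple group).

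Let $\hat\vartheta\in\irr{\bar G}$ be such an extension of $\vartheta$. Then $\hat\vartheta(1)=\vartheta(1)=\theta(1)^t$ is a $\pi'$-number, so $\hat\vartheta\in\irrpi{\bar G}$, and since $\hat\vartheta$ restricts irreducibly to the non-abelian group $M$ it cannot be linear; thus ${\rm Lin}(\bar G)\subsetneq\irrpi{\bar G}$, and we are done. The main obstacle is the extendibility step for the wreath-product-type action: one must verify carefully that the $\aut S$-extendibility of the single factor $\theta$, combined with the symmetry of $\vartheta$ under $\fS_t$, guarantees extendibility of $\vartheta$ all the way to $\bar G$ rather than merely to the inertia-type subgroup, and that the embedding of $\bar G/C_{\bar G}(M)$ into $\aut(S)\wr\fS_t$ is compatible with the chosen extension. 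Once the extension to $\aut S$ is in hand, the diagonal/tensor structure makes this routine, but it is the one place where the precise hypothesis is indispensable — without it the construction would only yield a character whose inertia group is proper and whose induced constituents could fail to have $\pi'$-degree.
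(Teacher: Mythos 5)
Your proposal is correct and takes essentially the same route as the paper: reduce to a quotient of $G$ in which the relevant chief factor $S^t$ becomes a minimal normal subgroup, and then extend $\theta\times\cdots\times\theta$ to the whole group using the hypothesis that $\theta$ extends to $\aut S$. The only difference is that the paper outsources this extension step to Lemma 5 of \cite{BCLP}, whereas you sketch the underlying wreath-product argument directly.
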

\begin{proof}
Since $\irrpi {G/M}={\rm Lin}(G/M)$ whenever $M\nor G$, by conveniently taking quotients we may assume that $G$ has a minimal normal subgroup $N$ which is the product of the $G$-conjugates of $S$. The statement then follows from Lemma 5 of \cite{BCLP}.
\end{proof}


\begin{rem}
Let $\pi=\{p,q\}$, let $G$ be such that $\irrpi G ={\rm Lin}(G)$ and let $S$ be a simple composition factor of $G$. 
Then using Proposition \ref{prop:Eunew}, we deduce that $S$ can not be an alternating group ${\sf A}_n$, unless $n=2q\6k+1=p\6m$ or $n=2q\6k=p\6m+1$ for some $k,m\in\mathbb{N}$. 

The observations in Proposition \ref{prop:SLnew} and the discussion before it further yield that if $S$ is a simple group of Lie type, then the defining characteristic is in $\pi$, and if $\pi=\{p,2\}$,  then $S$ is not $\PSL_n(p^a)$ unless $n=2^k$ for some $k\in\N$.    

Using Lemma \ref{lem:silly} together with GAP and the Character Table Library, we can see that if $\pi=\{2,p\}$ then the only simple sporadic groups that can possibly appear as composition factors of $G$ are $\mathrm{J}_3$ for $p=5$, $\mathrm{McL}$
for $p=7$, $\mathrm{Suz}$ for $p=13$ and 
$\mathrm{He}$ for $p=17$.
\end{rem}

\end{document}